\theoremstyle{plain}
\newtheorem{theorem}{Theorem}[section]
\newtheorem{proposition}[theorem]{Proposition}
\newtheorem{corollary}[theorem]{Corollary}
\newtheorem{lemma}[theorem]{Lemma}
\theoremstyle{definition}
\newtheorem{definition}[theorem]{Definition}
\theoremstyle{remark}
\newtheorem{remark}[theorem]{Remark}
\def\sideremark#1{\ifvmode\leavevmode\fi\vadjust{\vbox to0pt{\vss \hbox to 0pt{\hskip\hsize\hskip1em\vbox{\hsize2cm\tiny\raggedright\pretolerance10000  \noindent #1\hfill}\hss}\vbox to8pt{\vfil}\vss}}}
\numberwithin{equation}{section}
\title{The Snapshot Problem for the Euler-Poisson-Darboux Equation}
\author{Jue Wang}
\address{College of Science, North China Institute of Science \& Technology, Langfang, Hebei, 065201, China}
\email{wangjue\textunderscore math@126.com}
\author{Tomoyuki Kakehi}
\address{Department of Mathematics,
University of Tsukuba,
Tsukuba, Ibaraki, 305-8571, Japan}
\email{kakehi@math.tsukuba.ac.jp}
\author{Fulton Gonzalez}
\address{Department of Mathematics,
Tufts University, 
Medford, MA 02155, USA}
\email{fulton.gonzalez@tufts.edu}
\author{Jens Christensen}
\address{Department of Mathematics, 
Colgate University,
Hamilton, NY 13346, USA}
\email{jchristensen@colgate.edu}
\date{\today}
\subjclass[2020]{Primary: 35L05, Secondary: 58J45, 43A85}
\begin{document}

\maketitle

\begin{abstract}
The generalized Euler-Poisson-Darboux (EPD) equation with complex parameter $\alpha$ is given by
$$
\Delta_x u=\frac{\partial^2 u}{\partial t^2}+\frac{n-1+2\alpha}{t}\,\frac{\partial u}{\partial t},
$$
where $u(x,t)\in \mathscr E(\mathbb R^n\times \mathbb R)$,  with $u$ even in $t$.  For $\alpha=0$ and $\alpha=1$ the solution $u(x,t)$ represents a mean value over spheres and balls, respectively, of radius $|t|$ in $\mathbb R^n$.  In this paper we consider  existence and uniqueness results for the following two-snapshot problem: for fixed positive real numbers $r$ and $s$ and smooth functions $f$ and $g$ on $\mathbb R^n$, what are the conditions under which there is a solution $u(x,t)$ to the generalized EPD equation such that $u(x,r)=f(x)$ and $u(x,s)=g(x)$?  The answer leads to a discovery of Liouville-like numbers related to  Bessel functions, and we also study the properties of such numbers.
\end{abstract}
%
%
%\NeedsTeXFormat{LaTeX2e} 
%\documentclass[12pt]{amsart}
%\usepackage[margin=1in]{geometry}
%\usepackage[style=numeric]{biblatex}
%\addbibresource{Gonzalez.bib}
%
%\usepackage{amsmath}
%\usepackage{amssymb}
%%\usepackage{commath}
%\usepackage{mathtools}
%%\usepackage{xcolor}
%
%\usepackage[colorlinks]{hyperref}
%\theoremstyle{plain}
%\newtheorem{theorem}{Theorem}
%\newtheorem{lemma}[theorem]{Lemma}
%\newtheorem{proposition}[theorem]{Proposition}
%\newtheorem{corollary}[theorem]{Corollary}
%\newtheorem{definition}[theorem]{Definition}
%\newtheorem{remark}[theorem]{Remark}
%\numberwithin{theorem}{section}
%\numberwithin{equation}{section}
%
%%\numberwithin{theorem}{section}
%%\numberwithin{equation}{section}
%
%
%\newcommand{\C}{\mathbb{C}}
% \newcommand{\R}{\mathbb{R}}  % The real numbers.
%
%\newcommand{\cO}{\mathcal{O}}
%
%\def\sideremark#1{\ifvmode\leavevmode\fi\vadjust{\vbox to0pt{\vss \hbox to 0pt{\hskip\hsize\hskip1em\vbox{\hsize2cm\tiny\raggedright\pretolerance10000  \noindent #1\hfill}\hss}\vbox to8pt{\vfil}\vss}}}
%     
%\newcommand{\edz}[1]{\sideremark{#1}}
%
%\newcommand{\wG}{\widetilde{G}}

%\begin{document}
%
%\title{The snapshot problem for the Euler-Poisson-Darboux equation}

% \author{Jens Gerlach Christensen} 
% \address{ Department of Mathematics, Colgate University} 
% \email{jchristensen@colgate.edu}
% \urladdr{http://www.math.colgate.edu/~jchristensen}

%\maketitle

\section{Introduction}
For a fixed real number $t\geq 0$, let $M^t$ denote the spherical mean value operator on $\mathbb R^n$, which averages functions  over 
spheres of radius $t$:
$$
M^tf(x)=\frac{1}{\Omega_n}\,\int_{S^{n-1}} f(x+t\omega)\,d\omega.
$$
In the above, $\Omega_n=2\pi^{n/2}/\Gamma(n/2)$ is the area of the unit sphere $S^{n-1}$, $d\omega$ is the area measure on $S^{n-1}$, and $f$ is (say) a continuous function or distribution on $\mathbb R^n$.

It can be shown that $M^t$ is surjective as a linear operator on the locally convex spaces $\mathscr E(\mathbb R^n)$ and $\mathscr D'(\mathbb R^n)$. (See, for example, (\cite{Lim} or \cite{CGKW2024}.) However, the kernels of $M^t$ on these spaces are easily seen to be infinite-dimensional (\cite{Ehrenpreis1970}).  

We are interested in the following two-radius mean value existence problem.  Fix positive real numbers $r$ and $s$, with $r\neq s$.  Suppose that $g$ and $h$ are functions in $\mathscr E(\mathbb R^n)$, we wish to determine
whether there exists a
function $f\in\mathscr E(\mathbb R^n)$ such that 
\begin{equation}\label{E:MV-existence1}
g=M^r f\quad\text{and}\quad h=M^sf.
\end{equation}
For almost all choices of $(r,s)$, such a function $f$, if it exists, is unique. This is the two-radius Pompeiu problem.  (See Zalcman's paper \cite{Zalcman1972} for a proof.) 

The two-radius existence problem is clearly overdetermined, so in order for a solution to exist, there must be a relation between the functions $g$ and $h$. This is easy to determine: in fact the mean value operators $M^r$ and $M^s$ are convolution operators with compactly supported distributions, and hence they commute. Thus  for a function $f$ to satisfy \eqref{E:MV-existence1}, $g$ and $h$ must satisfy the \emph{compatibility condition}
\begin{equation}\label{E:compatibility1}
M^s g= M^r h.
\end{equation}
We therefore wish to determine whether the condition \eqref{E:compatibility1} is also sufficient for $f$ to exist.  Considering Zalcman's two-radius Pompeiu result, it is natural to expect
that the answer will depend on the choice of $r$ and $s$.
We can thus refine our two-radius existence problem to ask for the conditions on $r$ and $s$ so that the compatibility condition \eqref{E:compatibility1} on any $g$ and $h$ is sufficient
to guarantee the existence of a function $f$ satisfying \eqref{E:MV-existence1}.

%In order to use the tools of functional analysis, it is more convenient to confine our attention to functions $f$ and $g$ belonging to the Frech\'et space $\mathscr E(\mathbb R^n)$ of smooth functions on $\mathbb R^n$.
One of the main results of this paper will be to show that for almost all $(r,s)$, the  condition \eqref{E:compatibility1} on any $g$ and $h$ in $\mathscr E(\mathbb R^n)$
is indeed enough to guarantee the existence of an $f\in\mathscr E(\mathbb R^n)$ satisfying \eqref{E:MV-existence1}.  We shall see, however, that there are exceptional pairs $(r,s)$.

We can expand our point of view a bit, and consider the two-radius problem in the context of differential equations. Suppose that $u(x,t)\in C^\infty(\mathbb R^n\times\mathbb R)$ such that $u(x,t)=u(x,-t)$ for all $x$ and $t$.  An easy application of \'Asgeirsson's mean value theorem (\cite{JohnPlaneWaves}, Ch.~V) shows that $u(x,t)=M^t f(x)$ for some function $f\in \mathscr E(\mathbb R^n)$ if and only if $u(x,t)$ solves the initial value problem for the Euler-Poisson-Darboux (EPD) equation,
\begin{align}\label{E:EPD1}
&\Delta u=\frac{\partial^2 u}{\partial t^2} +\frac{n-1}{t}\,\frac{\partial u}{\partial t},\\
&u(x,0)=f(x).\label{E:cauchy1}
\end{align}
  The two-radius problem can then be restated in the following way.  What are the conditions on $r$ and $s$ so that for any $g$ and $h$ satisfying the compatibility condition \eqref{E:compatibility1}, there is a $C^\infty$ solution $u(x,t)$ to the EPD equation \eqref{E:EPD1} that is even in $t$, and such that $u(x,r)=g(x)$ and $u(x,s)=h(x)$ for all $x\in\mathbb R^n$?  And when is this solution $u(x,t)$ unique?  In this context, it will be convenient to think of the parameter $t$ as representing time, and we will call the functions $g$ and $h$ the \textit{snapshots} of the function $u(x,t)$ at the times $r$ and $s$, respectively.
  
Let us note that we can also consider the analogous problem where we take mean values over balls  of fixed radii $r$ and $s$ in $\mathbb R^n$ instead of over spheres.  

In this paper, we will consider two-snapshot existence and uniqueness problems for a more general version of the EPD equation \eqref{E:EPD1} which depends on a complex parameter $\alpha$, and which includes as special cases the existence and uniqueness questions for the two-radius mean value problems mentioned above.  These  generalized EPD equations have a long pedigree, but for our purposes we would like to specifically point out the paper by Bresters (\cite{Bresters1973}), which obtains an explicit fundamental solution in terms of convolution operators whose kernels have Fourier transforms which are normalized Bessel functions.  We would also like to mention Rubin's paper \cite{Rubin2023} which provides some background as well as numerous references to various aspects of the generalized EPD equation.  Finally, we would like to refer the reader to the papers by Lions and Delsarte \cite{DelsarteLions1957,Lions1956, Lions1959}, which guarantee the existence and uniqueness of solutions to the Cauchy problem associated with the generalized EPD equation.

 In Section~\ref{sec:generalizedEPD} we discuss the generalized EPD equation and the solutions found by Weinstein and Bresters under certain decay assumptions on the initial values. These decay assumptions are removed in Section~\ref{sec:solutionEPD}, 
where we only assume smoothness of the Cauchy data. These solutions are given by convolution operators with compactly supported distributions whose Fourier transforms are normalized Bessel functions. The asymptotic properties of Bessel functions are reviewed in Section~\ref{sec:Besselfunctions} where we also introduce Liouville-type numbers related to the zeros of the Bessel functions.
Some of the properties of these numbers are addressed in Section~\ref{Sec:BLproperties}.
In Section~\ref{sec:besselmultipliers} we uncover some of the mapping properties of convolution operators having kernels whose  Fourier transforms are given by these normalized Bessel functions. Finally Section~\ref{sec:twosnapshot} contains the main results of this paper. In it we
introduce the two-snapshot problem for the EPD equation and address the question of existence and uniqueness of solutions. We show that the existence
depends on whether or not the ratio of the snapshot times is of Liouville-type or not.

The authors would like to thank Professor B. Rubin for introducing us to the problem and for providing us with inspiration.

\section{The Generalized Euler-Poisson-Darboux Equation}\label{sec:generalizedEPD}
In what follows we assume that $n\geq 2$.

For a function $u(x,t)$ in $C^\infty(\mathbb R^n\times\mathbb R^+)$, let  us consider the generalized EPD equation with arbitrary complex parameter $\alpha$, 
\begin{equation}\label{E:EPDgen}
\Delta_x u=\frac{\partial^2 u}{\partial t^2}+\frac{n-1+2\alpha}{t}\,\frac{\partial u}{\partial t}
\end{equation}
The cases $\alpha=0$ and $\alpha=(1-n)/2$ specialize to the classical EPD and wave equation on $\mathbb R^n$, respectively.  Let us suppose that $u$ satisfies the following initial conditions:
\begin{equation}\label{E:EPDCauchy}
\lim_{t\to 0^+} u(x,t)=f(x),\quad \lim_{t\to 0^+} \frac{\partial u}{\partial t}\,(x,t)=0,
\end{equation}
where $f\in\mathscr E(\mathbb R^n)$.
As mentioned earlier, equation \eqref{E:EPDgen} has been studied in several different contexts.  Explicit solutions were obtained by Weinstein \cite{Weinstein54} and Bresters \cite{Bresters1973} by considering Fourier transforms in the space variables; see also \cite{Rubin2023}.  For $\alpha$ real and  $\geq (1-n)/2$ (and under slightly different differentiability assumptions on $u(x,t)$) the unique solution they obtained is given by the convolution
\begin{equation}\label{E:EPDconvolutionsoln}
u(x,t)=(f*m_\alpha^t)(x),
\end{equation}
where $m^t_\alpha\in\mathscr E'(\mathbb R^n)$ is given by
\begin{equation}\label{E:FS1}
m^t_\alpha(y)=\frac{\Gamma(\alpha+n/2)}{\pi^{n/2}\,\Gamma(\alpha)}\,t^{-n}\,(1-\|y/t\|^2)^{\alpha-1}_+.
\end{equation}
Here $m_t^\alpha$ acts on test functions $\varphi$ by
\begin{equation}\label{E:FS2}
m_\alpha^t(\varphi)=\frac{\Gamma(\alpha+n/2)}{\pi^{n/2}\,\Gamma(\alpha)}\,\left(t^{2-n-2\alpha}\right)\,\int_{\|x\|\leq t} \varphi(x)\,\left(t^2-\|x\|^2\right)^{\alpha-1}\,dx.
\end{equation}
When $\alpha=1$, note that $m_1^t(\varphi)$ is the average of $\varphi$ over the ball of radius $t$ centered at the origin.

Let us clarify the meaning of \eqref{E:FS2} by extending $\alpha$ to a complex parameter. 
 For fixed $t$ and $\varphi$, the integral on the right hand side of \eqref{E:FS2} 
\begin{equation}\label{E:dist-int}
\int_{\|x\|\leq t} \varphi(x)\,\left(t^2-\|x\|^2\right)^{\alpha-1}\,dx
\end{equation}
converges and is a holomorphic function of $\alpha$ on the half plane 
$\text{Re}\,\alpha>0$.  It can be analytically continued to be a meromorphic function on $\mathbb C$, with simple poles at $\alpha=0,-1,-2,\ldots$.  To see this, we write the integral \eqref{E:dist-int} in polar coordinates to obtain an Erdelyi-Kober type fractional integral:
\begin{align}
\int_{\|x\|\leq t} \varphi(x)\,\left(t^2-\|x\|^2\right)^{\alpha-1}\,dx&=\Omega_n\,\int_0^t (M^s\varphi)(0)\,(t^2-s^2)^{\alpha-1}\,s^{n-1}\,ds\nonumber \nonumber\\
&=\Omega_n\,t^{n+2\alpha-2}\,\int_0^1 (M^{tu}\varphi)(0)\,(1-u^2)^{\alpha-1}\,u^{n-1}\,du\nonumber\\
&=\Omega_n\,t^{n+2\alpha-2}\,\left(u^{n-1}\,(1-u^2)^{\alpha-1}_+\right) (M^{tu}(\varphi)(0))\label{E:dist-int2}
\end{align}
In the above, $\Omega_n=2\pi^{n/2}/\Gamma(n/2)$ is the area of the unit sphere in $\mathbb R^n$, and $(1-u^2)^{\alpha-1}_+$ is the distribution on $\mathbb R$ given for $\text{Re}\,\alpha>0$ by
$$
(1-u^2)^{\alpha-1}_+(\psi)=\int_0^1 (1-u^2)^{\alpha-1}\,\psi(u)\,du,\qquad \psi\in\mathscr E(\mathbb R).
$$

The right hand side of \eqref{E:dist-int2} extends to a meromorphic function on $\mathbb C$ with simple poles at $\alpha=0,-1,-2,\ldots$.  In particular, one can see that the residue of the pole at $\alpha=0$ equals $(\Omega_n/2)\,t^{n-2}\,(M^t\varphi)(0)$.

These poles of the right hand side of \eqref{E:dist-int2} are offset by the poles of $\Gamma(\alpha)$ in \eqref{E:FS2}, so it follows that $\alpha\mapsto m_\alpha^t(\varphi)$ is a meromorphic
function on $\mathbb C$ with simple poles at $\alpha\in -n/2-\mathbb Z^+$.  When $\alpha=0$, this gives us $m_0^t(\varphi)=M^t\varphi(0)$.     

For fixed $t$, the map $\alpha\mapsto m_\alpha^t$ is thus a meromorphic function on $\mathbb C$ with values in $\mathscr E'(\mathbb R^n)$, and more specifically in the space of distributions supported in the ball $\|x\|\leq t$.  (If $\varphi\in\mathscr E(\mathbb R^n)$ has support outside the ball $\|x\|\leq t$, then $m^t_\alpha(\varphi)=0$ when $\text{Re}\,\alpha>0$, so it equals $0$ for all $\alpha$ by analyticity.)  Here we are endowing $\mathscr E'(\mathbb R^n)$ with the weak* topology.

Combining \eqref{E:FS2} and \eqref{E:dist-int2} we have
\begin{equation}\label{E:mt-polar}
m^t_\alpha(\varphi)=\frac{\Gamma(\alpha+n/2)}{\pi^{n/2}\,\Gamma(\alpha)}\,\Omega_n\,\left(u^{n-1}\,(1-u^2)^{\alpha-1}_+\right) (M^{tu}(\varphi)(0)),
\end{equation}
for all $\alpha\notin -n/2-\mathbb Z^+$.  
If we now fix $\alpha$ in this set, \eqref{E:mt-polar} shows that $m_\alpha^t(\varphi)$ extends to an even $C^\infty$ function of $t$.  In particular, one can readily verify that
\begin{equation}\label{E:init-data}
m_\alpha^0(\varphi)=\varphi(0)\quad\text{and}\quad \left.\frac{d}{dt}\left(m^t_\alpha(\varphi)\right)\right|_{t=0}=0.
\end{equation}

\section{An Elementary Solution of the Generalized EPD Equation}
\label{sec:solutionEPD}

As mentioned earlier, the initial value problem \eqref{E:EPDgen}--\eqref{E:EPDCauchy} was solved in \cite{Bresters1973} for real $\alpha$ by taking the Fourier transform of the EPD equation with respect to the space variables and then by solving the resulting ODE.  Of course, this method initially assumes that $u(x,t)$ has certain decay properties with respect to $x$ for fixed $t$.  In this section, we obtain the solution \eqref{E:EPDconvolutionsoln} through an explicit calculation, which imposes no decay conditions on the space variables $x$, and which relies on Lions' uniqueness theorem for solutions of such equations (\cite{Lions1959}, Th\'eor\`eme 4.1). In the case when $\alpha$ is real and $\geq (1-n)/2$ we also present an alternative solution method which uses an analogue of Asgeirsson's mean value theorem.

%and using a more geometric method with no decay restrictions on the $x$, and in the spirit of \'Asgeirsson's mean value solution of the wave equation \cite{Asg37}.  We will also show that the solution obtained is unique when $\text{Re}\,\alpha\geq (1-n)/2$.  One additional feature of our method is that we make no assumption about the decay rate of $u(x,t)$ as a function of $x$. As already noted in \cite{Bresters1973}, there is no unique solution when  $\text{Re}\,\alpha<(1-n)/2$.
%\edz{\color{red} This paragraph should be rewritten regarding uniqueness.}

With this goal in mind, we begin with the following result.
\begin{lemma}\label{T:EPD1}
Fix $\alpha\in\mathbb C,\;\alpha\notin -n/2-\mathbb Z^+$.  Then we have
\begin{equation}\label{E:EPD-dist}
\Delta\, m^t_\alpha =\left(\frac{\partial^2}{\partial t^2}+\frac{n-1+2\alpha}{t}\,\frac{\partial}{\partial t}\right)\,m_\alpha^t.
\end{equation}
\end{lemma}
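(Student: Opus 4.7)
My plan is to verify \eqref{E:EPD-dist} by pairing both sides with an arbitrary test function $\varphi \in \mathscr{E}(\mathbb{R}^n)$, using the polar-coordinate representation \eqref{E:mt-polar} of $m_\alpha^t$ and then reducing everything to an identity about a single scalar function of one variable. Concretely, since $\Delta m_\alpha^t(\varphi) = m_\alpha^t(\Delta \varphi)$ by definition, and since the $t$-derivatives of $m_\alpha^t$ as a distribution coincide with the $t$-derivatives of the scalar function $t\mapsto m_\alpha^t(\varphi)$, the required identity is
\begin{equation*}
m_\alpha^t(\Delta\varphi) = \left(\partial_t^2 + \frac{n-1+2\alpha}{t}\,\partial_t\right) m_\alpha^t(\varphi).
\end{equation*}
Set $G(r) := (M^r\varphi)(0)$. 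This is a smooth even function of $r$, so $G'(0) = 0$.

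The key input is the classical Darboux equation for spherical means, which gives $M^r(\Delta\varphi)(0) = G''(r) + \frac{n-1}{r} G'(r)$. First I would work in the half-plane $\operatorname{Re}\alpha > 0$, where \eqref{E:mt-polar} and \eqref{E:dist-int2} combine to yield
\begin{equation*}
m_\alpha^t(\varphi) = C_\alpha \int_0^1 G(tu)\, u^{n-1}(1-u^2)^{\alpha-1}\,du,\qquad C_\alpha := \tfrac{\Gamma(\alpha+n/2)}{\pi^{n/2}\Gamma(\alpha)}\,\Omega_n,
\end{equation*}
so that $\partial_t m_\alpha^t(\varphi) = C_\alpha\int_0^1 G'(tu)\,u^n(1-u^2)^{\alpha-1}\,du$ and similarly $\partial_t^2 m_\alpha^t(\varphi) = C_\alpha\int_0^1 G''(tu)\,u^{n+1}(1-u^2)^{\alpha-1}\,du$. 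Using the Darboux identity on the left-hand side and substituting, the claim reduces (after canceling $C_\alpha$ and clearing a factor of $1/t$) to the purely one-variable identity
\begin{equation*}
\int_0^1 G''(tu)\,u^{n-1}(1-u^2)^{\alpha}\,du = \frac{1}{t}\int_0^1 G'(tu)\,\bigl[(n-1+2\alpha)\,u^n - (n-1)\,u^{n-2}\bigr](1-u^2)^{\alpha-1}\,du.
\end{equation*}

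I would verify this by integration by parts in $u$ on the left-hand integral, writing $G''(tu)\,du = t^{-1}\,d(G'(tu))$ and differentiating $u^{n-1}(1-u^2)^\alpha$. The boundary term at $u=1$ vanishes because $(1-u^2)^\alpha$ does for $\operatorname{Re}\alpha > 0$; the boundary term at $u=0$ vanishes because $G'(0)=0$ and $n\geq 2$. The resulting combination of $G'(tu)\cdot u^{n-2}(1-u^2)^\alpha$ and $G'(tu)\cdot u^n(1-u^2)^{\alpha-1}$ terms, together with the right-hand side, collapses after using $(1-u^2)^\alpha = (1-u^2)^{\alpha-1} - u^2(1-u^2)^{\alpha-1}$; every coefficient matches.

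Finally, to extend from $\operatorname{Re}\alpha > 0$ to all $\alpha \notin -n/2 - \mathbb{Z}^+$, I would invoke meromorphic continuation: for each fixed $t>0$ and $\varphi$, both $m_\alpha^t(\Delta\varphi)$ and $m_\alpha^t(\varphi)$ (together with its $t$-derivatives, obtained by differentiating the same $\alpha$-meromorphic integral representation) are meromorphic in $\alpha$ on $\mathbb{C}$ with poles contained in $-n/2-\mathbb{Z}^+$, so agreement on a non-empty open subset forces agreement on the full domain of regularity. I expect the main obstacle to be purely bookkeeping --- keeping the powers of $u$ and $(1-u^2)$ straight through the integration by parts and verifying that the two sides combine exactly --- together with making the boundary-term argument at $u=0$ rigorous for all $n\geq 2$ (one uses Taylor expansion $G'(r) = O(r)$ near $0$, which follows from $G$ being smooth and even).
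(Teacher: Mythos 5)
Your proposal is correct and follows essentially the same route as the paper's proof: pair $m_\alpha^t$ with a test function, reduce via polar coordinates and the Darboux equation for spherical means to a one-variable integral identity in $u$, verify it by integration by parts, and extend to all $\alpha\notin -n/2-\mathbb Z^+$ by analytic (meromorphic) continuation in $\alpha$. The only cosmetic difference is that you work on $\operatorname{Re}\alpha>0$ and check the identity directly by one integration by parts, whereas the paper restricts to $\operatorname{Re}\alpha>1$ and shows both sides equal a common expression involving $(1-u^2)^{\alpha-2}$.
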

\begin{proof}  It is enough to prove \eqref{E:EPD-dist} for the case $\text{Re}\,\alpha>1$, the general case following by analytic continuation.  For this case, and in view of \eqref{E:FS2}, this amounts to proving that
\begin{multline}\label{E:MV-alpha}
t^{2-n-2\alpha}\,\int_{\|x\|\leq t} \Delta\varphi(x)\,\left(t^2-\|x\|^2\right)^{\alpha-1}\,dx=\\
=\left(\frac{\partial^2}{\partial t^2}+\frac{n-1+2\alpha}{t}\,\frac{\partial}{\partial t}\right)\,\left(t^{2-n-2\alpha}\,\int_{\|x\|\leq t} \varphi(x)\,\left(t^2-\|x\|^2\right)^{\alpha-1}\,dx \right).
\end{multline}
for any $\varphi\in\mathscr E(\mathbb R^n)$.  Taking into account the first equality in \eqref{E:dist-int2} (and factoring out $\Omega_n$) we therefore need to prove that
\begin{multline*}
t^{2-n-2\alpha}\,\int_0^t M^s(\Delta\varphi)(0)\,(t^2-s^2)^{\alpha-1}\,s^{n-1}\,ds=\\
=\left(\frac{\partial^2}{\partial t^2}+\frac{n-1+2\alpha}{t}\,\frac{\partial}{\partial t}\right)\,\left(t^{2-n-2\alpha}\,\int_0^t (M^s\varphi)(0)\,(t^2-s^2)^{\alpha-1}\,s^{n-1}\,ds\right).
\end{multline*}
If we put $F(s)=(M^s\varphi)(0)$, then  $M^s(\Delta\varphi)(0)=F''(s)+((n-1)/s)\,F'(s)$, so upon using the substitution $s=tu$, the above equality reduces to
\begin{multline*}
\int_0^1\left(F''(tu)+\frac{n-1}{tu}\,F'(tu)\right)\,(1-u^2)^{\alpha-1}\,u^{n-1}\,du=\\
=\left(\frac{\partial^2}{\partial t^2}+\frac{n-1+2\alpha}{t}\,\frac{\partial}{\partial t}\right)\,\int_0^1 F(tu)\,(1-u^2)^{\alpha-1}u^{n-1}\,du.
\end{multline*}
A straightforward calculation using integration by parts then shows that both sides are equal to
$$
\frac{2\alpha-2}{t}\,\int_0^1 F'(tu)\,(1-u^2)^{\alpha-2}\,u^n\,du,
$$
proving \eqref{E:EPD-dist}.
\end{proof}

It is clear from Lemma \ref{T:EPD1} and the relation \eqref{E:init-data} that $u(x,t)=f*m_\alpha^t$ is a solution of the Cauchy problem \eqref{E:EPDgen}--\eqref{E:EPDCauchy}, for $\alpha\notin -n/2-\mathbb Z^+$. 

Now according to Lions (\cite{Lions1959}, Th\'eor\`eme 4.1; see also \cite{Lions1956}), it is precisely for such $\alpha$ that any solution to the Cauchy problem  is unique.  This gives us the following result.

\begin{theorem}\label{T:EPDsoln} Assume that $\alpha\notin -n/2-\mathbb Z^+$.
If $f\in\mathscr E(\mathbb R^n)$, then $u(x,t)=f*m_\alpha^t$ is the unique solution to the Cauchy problem for the generalized EPD equation \eqref{E:EPDgen}--\eqref{E:EPDCauchy}. 
\end{theorem}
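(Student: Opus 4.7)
The plan is to assemble the theorem directly from the two preparatory facts already in hand: Lemma~\ref{T:EPD1}, which says that the distribution $m_\alpha^t$ itself satisfies \eqref{E:EPD-dist}, and the initial data formulas \eqref{E:init-data}. Uniqueness will then be delegated to Lions' theorem (\cite{Lions1959}, Théorème~4.1), so the only real work is to check existence and the required regularity.

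For existence, I would first argue that $u(x,t) = (f * m_\alpha^t)(x)$ is well-defined and jointly smooth on $\mathbb{R}^n \times \mathbb{R}$. Since $m_\alpha^t \in \mathscr{E}'(\mathbb{R}^n)$ has compact support contained in $\{\|y\| \leq |t|\}$, the convolution $f * m_\alpha^t$ makes sense for any $f \in \mathscr{E}(\mathbb{R}^n)$ and yields a smooth function of $x$. For smoothness in $t$, the key observation is that the formula \eqref{E:mt-polar} exhibits $m_\alpha^t(\varphi)$, for $\varphi \in \mathscr{E}(\mathbb{R}^n)$, as an even $C^\infty$ function of $t$ (with the singular $t$-power factor absorbed into the polar representation). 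Applying this with the test function $\varphi_x(y) = f(x-y)$ and using that derivatives in $x$ can be pulled under the convolution gives joint smoothness of $u(x,t)$.

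Next I would verify that $u$ satisfies the EPD equation by pairing against an arbitrary test function or, more concretely, by noting that $\Delta_x$ commutes with convolution in $y$, while $\partial_t$ acts only on the $m_\alpha^t$ factor. Thus
\begin{equation*}
\left(\Delta_x - \frac{\partial^2}{\partial t^2} - \frac{n-1+2\alpha}{t}\,\frac{\partial}{\partial t}\right) (f * m_\alpha^t) = f * \left(\Delta - \frac{\partial^2}{\partial t^2} - \frac{n-1+2\alpha}{t}\,\frac{\partial}{\partial t}\right) m_\alpha^t,
\end{equation*}
which vanishes by Lemma~\ref{T:EPD1}. The initial conditions $\lim_{t\to 0^+} u(x,t) = f(x)$ and $\lim_{t\to 0^+} \partial_t u(x,t) = 0$ follow immediately from \eqref{E:init-data} applied to the test function $y \mapsto f(x-y)$.

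For uniqueness, I would simply invoke Théorème~4.1 of \cite{Lions1959}, which guarantees that the Cauchy problem \eqref{E:EPDgen}--\eqref{E:EPDCauchy} has a unique solution precisely when $\alpha \notin -n/2 - \mathbb{Z}^+$, completing the proof. The main conceptual obstacle is really only the bookkeeping at step~(1): one must be confident that the analytically continued, distribution-valued family $t \mapsto m_\alpha^t$ produces a genuinely smooth function in both variables when convolved with $f$, so that the pointwise EPD identity and the limits in \eqref{E:EPDCauchy} are actually valid rather than merely formal.
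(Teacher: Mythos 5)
Your proposal is correct and follows essentially the same route as the paper: existence comes from Lemma~\ref{T:EPD1} together with the initial-data relations \eqref{E:init-data} (with the smoothness in $t$ supplied by \eqref{E:mt-polar}), and uniqueness is delegated to Lions' Th\'eor\`eme~4.1. Your write-up simply makes explicit the bookkeeping (joint smoothness, commuting $\Delta_x$ and $\partial_t$ with the convolution) that the paper summarizes as ``clear''; the paper's separate Asgeirsson-style argument is only an optional alternative for real $\alpha\geq(1-n)/2$.
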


The above solution was obtained for real $\alpha$ by Bresters \cite{Bresters1973} using Fourier transform methods, under the assumption that $f$ is rapidly decreasing on $\mathbb R^n$.  See also the paper by Weinstein \cite{Weinstein54}.

%The next theorem will show that \eqref{E:EPDconvolutionsoln} is the unique solution to the initial value problem \eqref{E:EPDgen}--\eqref{E:EPDCauchy} under the simplfying assumption that $u(x,t)$ is $C^\infty$ on $\mathbb R^n\times\mathbb R$ and even in $t$.

%For $\text{Re}\,\alpha\geq (1-n)/2$, the unique solution to the Cauchy problem \eqref{E:EPDgen}--\eqref{E:EPDCauchy} is
%\begin{equation}\label{E:EPDsoln1}
%u(x,t)=(f*m^t_\alpha)(x).
%\end{equation}
%\end{theorem}

For completeness, we offer below an alternative proof of Theorem~\ref{T:EPDsoln}, in the case when $\alpha\geq (1-n)/2$, which uses a variant of Asgeirsson's mean value theorem with very few special tools.  

%It is clear from Lemma \ref{T:EPD1} and the relation \eqref{E:init-data} that $u(x,t)=f*m_\alpha^t$ is a solution of the Cauchy problem, for any $\alpha$. 

We note of course that when $\alpha=(1-n)/2$, \eqref{E:EPDgen}--\eqref{E:EPDCauchy} reduces to the Cauchy problem for the classical wave equation, whose solution is unique and well-known. 
%By using \'Asgeirsson's mean value theorem, one obtains the unique solution \cite{Asg37}.

In any case, let us suppose that $\text{Re}\,\alpha\geq (1-n)/2$ and that $u(x,t)$ is a $C^\infty$ function on $\mathbb R^n\times\mathbb R$, even in $t$, and is a solution to the generalized EPD equation \eqref{E:EPDgen}.  Fix $x\in\mathbb R^n$, and for $s$ and $t$ real let
\begin{equation}\label{E:mvalpha1}
U(s,t)=(u(\;\cdot\;, t)*m_\alpha^s)(x),
\end{equation}
where the convolution is taken with respect to the first argument.  By \eqref{E:mt-polar} and the subsequent remarks, $U(s,t)$ is a $C^\infty$ function of $(s,t)$ in $\mathbb R\times\mathbb R$, is even in $s$ and in $t$, and with $U(0,t)=u(x,t)$.  According to Lemma \ref{T:EPD1}, $U(s,t)$ satisfies the ``ultrahyperbolic'' differential equation
$$
\left(\frac{\partial^2}{\partial s^2}+\frac{n-1+2\alpha}{s}\,\frac{\partial}{\partial s}\right)\,U(s,t)=
\left(\frac{\partial^2}{\partial t^2}+\frac{n-1+2\alpha}{t}\,\frac{\partial}{\partial t}\right)\,U(s,t).
$$
If we put $A(s)=s^{n-1+2\alpha}$, then the above equation can be written as
\begin{equation}\label{E:asg1}
\left(\frac{\partial^2}{\partial s^2}+\frac{A'(s)}{A(s)}\,\frac{\partial}{\partial s}\right)\,U(s,t)=
\left(\frac{\partial^2}{\partial t^2}+\frac{A'(t)}{A(t)}\,\frac{\partial}{\partial t}\right)\,U(s,t).
\end{equation}
for all $(s,t)$ in the first quadrant.  

We wish to prove that $U(s,t)=U(t,s)$ for all $s$ and $t$.  It is enough to prove this for $(s,t)$ in the (closed) first quadrant.  For this, we can reprise part of the proof of \'Asgeirsson's mean value theorem (see \cite{Asg37} or \cite{GGA}, Ch.II).

Let $V(s,t)=U(s,t)-U(t,s)$.  Then, since \eqref{E:asg1} is symmetric in $s$ and $t$, $V(s,t)$ satisfies the equation
$$
\left(\frac{\partial^2}{\partial s^2}+\frac{A'(s)}{A(s)}\,\frac{\partial}{\partial s}\right)\,V(s,t)=
\left(\frac{\partial^2}{\partial t^2}+\frac{A'(t)}{A(t)}\,\frac{\partial}{\partial t}\right)\,V(s,t).
$$
Multiplying both sides by $2 A(s)\,\partial V/\partial t$, we obtain the equality
\begin{equation}\label{E:ultrahyp}
\text{div}\,\left\langle -2A(s)\,\frac{\partial V}{\partial s}\,\frac{\partial V}{\partial t},A(s)\,\left(\left(\frac{\partial V}{\partial s}\right)^2+\left(\frac{\partial V}{\partial t}\right)^2\right)\right\rangle = -2A(s)\,\frac{A'(t)}{A(t)}\,\left(\frac{\partial V}{\partial t}\right)^2.
\end{equation}
Consider the region $\mathcal R$ in the first quadrant 
 of the $(s,t)$-plane enclosed by the triangle $OPQ$, where $OP$ belongs to the line $s=t$ and $PQ$ is horizontal.  We integrate both sides of \eqref{E:ultrahyp} over $\mathcal R$ and apply the divergence theorem to the left hand integral.  Note that the right hand integral will converge absolutely since 
$$
\left|A(s)\,\frac{A'(t)}{A(t)}\right|=|n-1+2\alpha|\,s^{n-1+2\,\text{Re}\,\alpha}\,t^{-1}\leq |n-1+2\alpha|\, t^{n-2+2\,\text{Re}\,\alpha}
$$
since $n-1+2\,\text{Re}\,\alpha\geq 0$, and $s\leq t$ inside the triangle.  Replacing $(s,t)$ by polar coordinates then proves the absolute convergence.

%The proof in \cite{GGA}, Ch.~II, Sec.~5 of \'Asgeirsson's mean value theorem (for rank one symmetric spaces) -- which depends %only on the fact that $A'(s)>0$, and $A(r)\,(A'(s)/A(s))<r^{n-2+2\alpha}$ when $r<s$ -- shows that $U(s,t)=U(t,s)$

\begin{center}
\begin{pspicture}(-.5,-.5)(2.5,2.5)
\psline(0,2)(2,2)
\psline(0,0)(2,2)
\rput[l](2.5,-.2){\small $s$}
\rput[l](0.2,2.6){\small $t$}
\rput[r](-.1,-.2){\small $O$}
\rput[l](2.1,2){\small $P$}
\rput[r](-.1,2){\small $Q$}
\rput[l](.5,1.3){\small $\mathcal R$}
\psaxes[labels=none,ticks=none]{->}(0,0)(-.5,-.5)(2.5,2.5)
\end{pspicture}
\end{center}
The integral of the right hand side of \eqref{E:ultrahyp} is $\leq 0$ since $n-1+2\alpha>0$. On the other hand, the line integral on the left is $\geq 0$.  In fact, the line integral over $OP$ vanishes since $\partial V/\partial s+\partial V/\partial t=0$ on the line $s=t$, and the line integral over $OQ$ vanishes since $A(s)=0$ there.  Finally, the line integral over $PQ$ is clearly $\geq 0$.  Since $A'(t)> 0$ in $\mathcal R$, it follows that $\partial V/\partial t=0$ at all points in $\mathcal R$, and, as $V$ vanishes on $OP$, we see that $V=0$ on $\mathcal R$.  Since $\mathcal R$ is arbitrary, we conclude that $V$ is identically zero on the whole plane.  Thus $U(s,t)=U(t,s)$ for all $(s,t)$, so in particular $U(0,t)=U(t,0)$ for all $t$.  By \eqref{E:mvalpha1}, this gives
$$
u(x,t)=(f*m^t_\alpha)(x).
$$

\section{%Preliminaries on 
Bessel functions and their zeros}\label{sec:Besselfunctions}
\subsection{Preliminaries on 
Bessel functions}\label{sec:Besselprelims}

A direct calculation using \eqref{E:mt-polar} for $\text{Re}\,\alpha>0$, plus analytic continuation, shows that  the Fourier-Laplace transform of the distribution $m_\alpha^t$ equals
\begin{equation}\label{E:malphaFT}
 (m_\alpha^t)^\sim(\zeta)=\Gamma(\alpha+n/2)\,j_{(n-2)/2+\alpha}(t\,(\zeta\cdot\zeta)^{1/2}), 
\end{equation}
for $\zeta\in \mathbb C^n$ and $\alpha\notin -n/2 - \mathbb Z^+$.
%\edz{\color{red}It is for $\alpha$ and not real part, right? Last part of paragraph should also be rewritten as we have uniqueness for most $\alpha$.}
In the above, for any $\nu\in\mathbb C$, $j_\nu$ denotes the normalized Bessel function of order $\nu$:
\begin{equation}\label{E:normalizedbessel}
j_\nu(z)=(z/2)^{-\nu}\,J_\nu(z).
\end{equation}
(See also \cite{Bresters1973}, equation 3.5.)

We are only interested in the cases when the values of the parameter $\alpha$ ensure that the initial value problem \eqref{E:EPDgen}--\eqref{E:EPDCauchy} has a unique solution; that is to say, when $\alpha\neq -n/2-\mathbb Z^+$. Therefore we will assume henceforth that $\nu$ is not a negative integer.

To solve the two-snapshot existence problem stated in the Introduction, we will now need to collect some basic facts about Bessel functions, using standard references such as \cite{WatsonTreatiseBessel} or \cite{NIST:DLMF}, and specifically facts regarding their asymptotic properties and the distribution of their large zeros.

From the series expansion of $J_\nu$, we have that
\begin{equation}\label{E:besselseries}
j_\nu(z) = \sum_{k=0}^\infty \frac{(-1)^k}{k!\Gamma(k+\nu+1)}\left(\frac{z}{2}\right)^{2k}
\end{equation}
for $z\in\mathbb{C}$.   While $J_\nu(z)$ is multivalued in general, note that $j_\nu(z)$ is well-defined, even, and entire. 

With the possible exception of $z=0$, the zeros of $j_\nu$ coincide with those of $J_\nu$.  The zeros of $j_\nu$ are simple, and they are real when $\nu$ is real and $> -1$ (\cite{WatsonTreatiseBessel}, Sec.~15.27).  According to \cite{WatsonTreatiseBessel}, Sec.~15.53, the large (in modulus) zeros of $j_\nu$ with nonnegative real part, for any complex order $\nu$, are indexed by all sufficiently large positive integers $m$ in the following manner:
\begin{equation}\label{E:largezeros}
   a_m= m\,\pi+\left(\nu-\frac 12\right)\,\frac{\pi}{2}+O\left(\frac 1m\right).
\end{equation}
We observe that these large zeros tend towards the horizontal line $\text{Im}\,z=(\pi/2)\,\text{Im}\,\nu$ and are spaced approximately $\pi$ units apart.

We will also make use of the following asymptotic estimate of $j_\nu(z)$ for large $z$, again for any complex order $\nu$, which comes from the asymptotic expansion of $J_\nu(z)$ due to Hankel:
\begin{multline}\label{E:asymptotic-est}
    j_\nu(z)\;\sim\;\left(\frac{2}{\pi z}\right)^{\frac 12+\nu}\,\left[\cos (z-(\nu+1/2)(\pi/2))\,\left(1+O\left(\frac 1{z^2}\right)\right)\right.\\
    - \frac 1z\,\left.\sin (z-(\nu+1/2)(\pi/2))\,\left(1+O\left(\frac 1{z^2}\right)\right)\right]
\end{multline}
(See \cite{WatsonTreatiseBessel}, Sec.~7.21.)
The above estimate is valid for all large  $z$ with $|\text{arg}\,z|$ bounded away from $\pi$, and in particular for all large $z$ with positive real part.  In the above, the complex exponent $z^\nu$ refers to its principal part.

Using the fact that $\cos(x+iy) = \cos(x)\cosh(y)-i\sin(x)\sinh(y)$, the approximation
\eqref{E:asymptotic-est} shows that $j_\nu$ satisfies the estimate
\begin{equation}\label{E:PW-est1}
    |j_\nu(z)|\leq C\,|z|^{-\text{Re}\,\nu}\,e^{|\text{Im}\,z|},\qquad z\in\mathbb C,
\end{equation}
for some $C>0$.  In particular note that $j_\nu$ is bounded on $\mathbb R$ when $\text{Re}\,\nu\geq 0$.
By the Paley-Wiener Theorem we see that $j_\nu$ is the one-dimensional Fourier transform of a distribution 
supported on the interval $[-1,1]$.   We will return to this distribution in section~\ref{sec:besselmultipliers}.

Finally, we will make use of the easily derived connection formula
\begin{equation}\label{E:Bessel-connection}
    j_\nu'(z)=-z\,j_{\nu+1}(z).
\end{equation}
(See \cite{NIST:DLMF}, Sec. 10.6.)

\subsection{Liouville-type numbers related to %the zeros of 
Bessel functions}
In this section we consider certain complex numbers related to the zeros of the normalized Bessel function $j_\nu$, where $\nu$ is a fixed complex order.

The two-snapshot existence problem for the generalized EPD equation \eqref{E:EPDgen} essentially turns out to be a problem of small denominators \cite{Rubin2023}. In the classical formulation of such problems, Liouville numbers play a crucial role. Liouville numbers are transcendental numbers which can be very closely approximated by rational numbers \cite{BakerTranscendental}.  In the present case we will need to consider complex numbers which can be very closely approximated by ratios of zeros of Bessel functions.
For this we introduce the notions of $j_\nu$-rational and $j_\nu$-Liouville numbers.
\begin{definition}
  A nonzero complex number $z$ is said to be \emph{$j_\nu$-rational} if it is of the form $z=a/b$ where $a$ and
  $b$ are zeros of the Bessel function $j_{\nu}$ and $b\neq 0$.

  A complex number $z$ is called \emph{$j_\nu$-Liouville}  provided that (i) $z$ is not
  $j_\nu$-rational, and (ii) for every $N\in\mathbb{N}$
  there are zeros $a$ and $b$  of the Bessel function $j_{\nu}$, with $|b|>1$,
  such that $|z-a/b|<1/|b|^N$.
\end{definition}
 From \eqref{E:besselseries} it is not hard to see that $j_{1/2}(z)=\frac{\sin(z)}{z}$ and therefore the $j_{1/2}$-Liouville numbers are the same as classical Liouville numbers. (For the definition of a Liouville number see for example \cite{Hardy-Wright} or \cite{Steuding2005}).

It is also easy to see that any $j_\nu$-Liouville number must be a real number.  In fact, if $z$ is $j_\nu$-Liouville, then there are sequences $\{a_\ell\}$ and $\{b_\ell\}$ consisting of zeros of $j_\nu$ such that $z=\lim a_\ell/b_\ell$.  It is clear that $\lim |b_\ell|=\infty$.  Now by \eqref{E:largezeros} the zeros of $j_\nu$ have bounded imaginary part.  Hence
$$
z=\lim_{\ell\to\infty} \frac{\text{Re}\,a_\ell +i\,\text{Im}\,a_\ell}{\text{Re}\,b_\ell+i\,\text{Im}\,b_\ell}
=\lim_{\ell\to\infty} \frac{\text{Re}\,a_\ell}{\text{Re}\,b_\ell}.
$$
More is true: if the index $\nu$ is not real, we will show in Corollary \ref{T:nu-complex} below that there are no $j_\nu$-Liouville numbers. 

Since the large zeros of $j_\nu$ are spaced like an arithmetic sequence, one would expect the distribution of the $j_\nu$-Liouville numbers in $\mathbb C$ to be similar to that of the Liouville numbers in $\mathbb R$.  This will indeed be the case when $\nu$ is real, as we will show in Section~\ref{Sec:BLproperties}.  In particular, we will show that the set of $j_\nu$-Liouville numbers is uncountable, dense and has measure zero in $\mathbb R$. We also note that the reciprocal of a $j_\nu$-Liouville number is $j_\nu$-Liouville; the proof is the same as for standard Liouville numbers. 

\begin{remark}
    The definition of $j_\nu$-Liouville numbers can be considerably generalized.
    For any discrete (and separated) set $S$ of real numbers one can define 
    $S$-rational numbers to be all possible ratios $a/b$ with $a,b$
    in $S$ ($b\neq 0$) while an $S$-Liouville number $x$ is not $S$-rational and for any $N$ there are
    $a,b$ in $S$ such that
    $|x-a/b|<1/|b|^N$.
\end{remark}

\section{Bessel Fourier-multipliers}\label{sec:besselmultipliers}
In this section we look at some of the mapping properties of
convolution operators whose Fourier transforms are normalized Bessel functions.

Fix $\nu\in\mathbb C$.   Now for any $\zeta\in\mathbb C^n$, we have
\begin{equation}\label{E:zeta-estimate}
|\text{Re}\,(\zeta\cdot \zeta)^{1/2}|\leq \|\text{Re}\,\zeta\|,
\quad
|\text{Im}\,(\zeta\cdot \zeta)^{1/2}|\leq \|\text{Im}\,\zeta\|
\end{equation}
(\cite{Zalcman1972}, Sec.~3) and of course 
$|(\zeta\cdot\zeta)^{1/2}|\leq \|\zeta\|$.
Hence from \eqref{E:PW-est1} and the subsequent remark, we obtain
$$
|j_\nu(t(\zeta\cdot\zeta)^{1/2})|\leq C\,e^{|t|\,\|\text{Im}\,\zeta\|},\qquad t\in\mathbb R,\;\zeta\in\mathbb C^n.
$$
when $\text{Re}\,\nu\geq 0$, and
$$
|j_\nu(t(\zeta\cdot\zeta)^{1/2})|\leq C\,\|\zeta\|^{-\text{Re}\,\nu}\,e^{|t|\,\|\text{Im}\,\zeta\|},\qquad t\in\mathbb R,\;\zeta\in\mathbb C^n.
$$
when $
\text{Re}\,\nu<0$.
By the Paley-Wiener theorem, there is a unique distribution $S^t_\nu$, supported in the closed ball $\overline B_{|t|}(0)$ such that $(S^t_\nu)^\sim(\zeta)=j_\nu(t\,(\zeta\cdot\zeta)^{1/2})$, for all $\zeta\in\mathbb C^n$.
When $\nu=\alpha+(n-2)/2$, we note from \eqref{E:malphaFT} that
$$
S^t_\nu=\frac{1}{\Gamma(\alpha+n/2)}\,m^t_\alpha.
$$
Our immediate objective now is to show that when $t\neq 0$, the convolution operator 
$$
\varphi\mapsto \varphi*S^t_\nu
$$
is a surjective linear operator on $\mathscr E(\mathbb R^n)$.
It turns out that this depends on the fact that the Fourier transform $(S^t_\nu)^\sim$ does not decrease too rapidly on balls in $\mathbb C^n$ centered at points in $\mathbb R^n$.  We formulate  this more precisely as follows.

%For this, we define
%$$
%M_t^\nu:\mathscr{E}(\mathbb{R}^n)\to \mathscr{E}%(\mathbb{R}^n),\qquad
%M_t^\nu f(x) = f*m_t^\nu(x)
%$$
%when 
%$$
%\widetilde{m}_t^\nu (z) = \Gamma((n+2\nu)/2) 
%j_{\frac{n+2\nu-2}{2}} (t \langle z,z\rangle^{1/2}).
%$$
%The operators were first studied by [Bresters] as providing
%solutions to the Euler-Poisson-Darboux equation which we return
%to in section~\ref{sec:EPD}.

%\subsection{Surjectivity of Bessel multipliers}
\begin{definition}
  A function $u:\mathbb{C}^n\to\mathbb{C}$ is called \emph{slowly decreasing}
  provided that there is a constant $A>0$ such that
  \begin{equation}
    \label{eq:slowdecay}
    \sup\{ |u(\zeta)| 
    : \zeta\in \mathbb{C}^n, \|\zeta-\xi\|\leq A\log(2+\| \xi\|)\}
    \geq (A+\|\xi\|)^{-A}
\end{equation}
for all $\xi\in \mathbb{R}^n$.
\end{definition}

The following Theorem is due to Ehrenpreis \cite{Ehrenpreis1960}.
\begin{theorem}\label{T:convsurj}
 Assume that $S \in \mathscr{E}'(\mathbb{R}^n)$.  
  Then the convolution operator on $\mathscr E(\mathbb{R}^n)$ given by $\varphi\mapsto \varphi*S$ is surjective
 if and only if the Fourier transform $\widetilde S$ is slowly decreasing.
\end{theorem}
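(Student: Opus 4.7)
The plan is to prove the two implications separately by passing to the Fourier--Laplace side and converting surjectivity of $T_S:\varphi\mapsto\varphi*S$ into a division problem in the Paley--Wiener algebra. Since $\mathscr E(\mathbb R^n)$ is a reflexive Fr\'echet space, the closed range theorem reduces the surjectivity of $T_S$ to the conjunction of (i) injectivity of the transpose $T_S^t$ on $\mathscr E'(\mathbb R^n)$ and (ii) weak-$*$ closed range for $T_S^t$. A direct computation gives $T_S^t u = u*\check S$, where $\check S(x)=S(-x)$, so under Fourier--Laplace transform $T_S^t$ becomes multiplication by $\widetilde S(-\zeta)$. Assuming $S\neq 0$, condition (i) is automatic: if $\widetilde u(\zeta)\,\widetilde S(-\zeta)\equiv 0$ on $\mathbb C^n$, then the entire function $\widetilde u$ must vanish on the open dense complement of the zero variety of $\widetilde S$, so $\widetilde u\equiv 0$ and $u=0$. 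Thus the real content of the theorem is the equivalence between condition (ii) and the slowly decreasing property of $\widetilde S$.

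For the direction \emph{slowly decreasing $\Rightarrow$ surjective}, the weak-$*$ closed range translates via Paley--Wiener into the following division lemma: for every $u\in\mathscr E'(\mathbb R^n)$ for which the quotient $\widetilde u/\widetilde S$ extends to an entire function $F$ on $\mathbb C^n$, one must show that $F$ itself obeys Paley--Wiener bounds of the form $|F(\xi+i\eta)|\leq C(1+\|\xi+i\eta\|)^M e^{R\|\eta\|}$. Hypothesis~\eqref{eq:slowdecay} supplies, within a logarithmically large ball around every real point $\xi$, a point where $|\widetilde S|$ is at least a fixed negative power of $1+\|\xi\|$; one then converts this lower bound into the required polynomial upper bound on $F$ by a minimum-modulus/subharmonicity argument on a polydisk of that radius, or alternatively by H\"ormander's $L^2$-estimates for $\overline\partial$. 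This division step is the technical heart of Ehrenpreis's original argument and is by far the main obstacle.

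For the converse, argue contrapositively: if~\eqref{eq:slowdecay} fails, extract a sequence $\xi_k\in\mathbb R^n$ with $\|\xi_k\|\to\infty$ on whose growing neighborhoods $|\widetilde S|$ is super-polynomially small. Construct a function $f\in\mathscr E(\mathbb R^n)$ by superposing modulated wave packets whose Fourier content is concentrated near the $\xi_k$, arranged so that any $\varphi\in\mathscr E(\mathbb R^n)$ satisfying $\varphi*S=f$ would force $\widetilde\varphi=\widetilde f/\widetilde S$ to grow near the $\xi_k$ faster than any polynomial, contradicting smoothness of $\varphi$. The amplitudes must be tuned so that $f$ itself remains smooth while the quotient becomes inadmissibly large; this bookkeeping is delicate but standard, and the whole construction is driven by the witness sequence $\xi_k$ for the failure of~\eqref{eq:slowdecay}.
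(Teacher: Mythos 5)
First, note that the paper does not prove this statement at all: it is quoted as a theorem of Ehrenpreis \cite{Ehrenpreis1960}, so there is no in-paper argument to compare with; your attempt has to be judged on its own. As such, it is an outline of the standard Ehrenpreis--H\"ormander strategy rather than a proof, and the two places where the theorem actually lives are left blank. In the direction ``slowly decreasing $\Rightarrow$ surjective,'' after the (correct) duality reduction you arrive at the division problem --- if $\widetilde u/\widetilde S$ is entire for some $u\in\mathscr E'(\mathbb R^n)$, show it satisfies Paley--Wiener bounds --- and you then only name the tools (minimum modulus on polydisks, or H\"ormander's $\overline\partial$ estimates) without carrying out the estimate; you say yourself this is ``the technical heart'' and ``the main obstacle.'' That is precisely the content of the theorem, so deferring it leaves a genuine gap. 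Moreover, the identification of weak-$*$ closedness of the range of the transpose with this divisibility-plus-bounds property is itself a step that needs proof (one normally avoids stating it this way and instead verifies the surjectivity criterion directly once the division theorem is in hand); as written it is asserted, not established.

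The converse direction has a more serious defect than incompleteness: the central formula $\widetilde\varphi=\widetilde f/\widetilde S$ is not meaningful, because $\varphi$ and $f$ are merely smooth functions in $\mathscr E(\mathbb R^n)$, with no decay, and hence have no Fourier transforms as functions (only $S\in\mathscr E'$ does). So the proposed contradiction ``$\widetilde\varphi$ would grow too fast near $\xi_k$, contradicting smoothness of $\varphi$'' cannot be run as stated, and tuning amplitudes of wave packets does not repair this: one must also rule out \emph{every} preimage, not just a formally computed one. The standard necessity argument instead proceeds by duality: surjectivity of $\varphi\mapsto\varphi*S$ yields, via the open mapping theorem or the surjectivity criterion for Fr\'echet spaces, a priori seminorm inequalities for $u\mapsto u*\check S$ on $\mathscr E'(\mathbb R^n)$, and one tests these inequalities (e.g.\ with exponentials $e^{i\langle\cdot,\zeta\rangle}$ and a minimum-modulus argument) to extract the lower bound \eqref{eq:slowdecay}. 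So the overall plan points at the right literature, but both implications are missing their essential arguments, and the converse as sketched would fail.
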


This allows us to prove the following.
\begin{theorem}\label{T:s-nu-surjectivity}
    For any real number $t\neq 0$ and any $\nu\in\mathbb C$, the convolution operator on $\mathscr E(\mathbb R^n)$ given by $\varphi\mapsto \varphi*{S^t_\nu}$ is surjective.
\end{theorem}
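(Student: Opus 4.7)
The strategy is to invoke Ehrenpreis' criterion (Theorem~\ref{T:convsurj}): surjectivity of convolution with $S^t_\nu$ is equivalent to showing that $\widetilde{S^t_\nu}(\zeta) = j_\nu\bigl(t(\zeta\cdot\zeta)^{1/2}\bigr)$ is slowly decreasing on $\mathbb{C}^n$. Since $j_\nu$ is even we may assume $t>0$. The key observation is that for $\xi\in\mathbb{R}^n\setminus\{0\}$ the real ray $\zeta(s) := (1+s/\|\xi\|)\,\xi$ satisfies $(\zeta(s)\cdot\zeta(s))^{1/2} = \|\xi\|+s$ and $\|\zeta(s)-\xi\| = |s|$, so that along this probe $\widetilde{S^t_\nu}(\zeta(s)) = j_\nu(t(\|\xi\|+s))$. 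Hence it suffices to exhibit, for each $\xi$, a real $s$ with $|s|\leq A\log(2+\|\xi\|)$ on which $|j_\nu(t(\|\xi\|+s))|$ is bounded below by $(A+\|\xi\|)^{-A}$.

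For $\|\xi\|$ larger than some threshold $R_0$, the real interval $[t(\|\xi\|-A\log(2+\|\xi\|)),\; t(\|\xi\|+A\log(2+\|\xi\|))]$ has length at least $\pi$ and lies in the region of validity of the Hankel asymptotic \eqref{E:asymptotic-est}. Within it, I will pick a real $z_0$ on which the oscillatory factor $\cos\bigl(z_0-(\nu+\tfrac12)\pi/2\bigr)$ is bounded below. Using the elementary identity $|\cos(x+iy)|^2 = \cos^2 x + \sinh^2 y$, this factor evaluated at real $z_0$ has modulus at least $|\sinh(\mathrm{Im}\,\nu\cdot\pi/2)|$, which is positive whenever $\mathrm{Im}\,\nu\neq 0$; when $\nu$ is real, some $z_0$ in any length-$\pi$ interval achieves $|\cos| = 1$. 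Combining this with the amplitude estimate from \eqref{E:asymptotic-est}, and absorbing the $O(1/z^2)$ remainder by enlarging $R_0$, yields
\[
|j_\nu(z_0)| \;\geq\; C\,|z_0|^{-1/2 - \mathrm{Re}\,\nu} \;\geq\; C'\,(1+\|\xi\|)^{-1/2 - \mathrm{Re}\,\nu},
\]
since $|z_0|\leq C''(1+\|\xi\|)$. This dominates $(A+\|\xi\|)^{-A}$ once $A > |\mathrm{Re}\,\nu|+1/2$ and $A$ is large enough to absorb $C'$.

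For the remaining range $\|\xi\|\leq R_0$, I will use that $F(\zeta):=j_\nu(t(\zeta\cdot\zeta)^{1/2})$ is entire with $F(0) = j_\nu(0) = 1/\Gamma(\nu+1)\neq 0$, since $\nu$ is not a negative integer by our standing convention. Choosing $A$ further so that $A\log 2\geq R_0$, the disk $\{\zeta:\|\zeta-\xi\|\leq A\log(2+\|\xi\|)\}$ contains $0$ for every such $\xi$, giving $\sup|F|\geq|F(0)|>0$; one last increase of $A$ makes this exceed $(A+R_0)^{-A}$. This establishes slow decrease uniformly in $\xi$, and Theorem~\ref{T:convsurj} then yields the claimed surjectivity.

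The main obstacle I foresee is handling the cosine factor in the Hankel asymptotic when $\nu$ is complex: a priori it could oscillate through small values along the real line. The identity $|\cos(x+iy)|^2 = \cos^2 x + \sinh^2 y$ is exactly the nontrivial input that rules this out and allows the same argument to cover real and complex $\nu$ simultaneously.
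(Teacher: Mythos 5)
Your proposal is correct and takes essentially the same route as the paper's proof: reduce via Ehrenpreis' Theorem~\ref{T:convsurj} to slow decrease of $j_\nu(t(\zeta\cdot\zeta)^{1/2})$, then probe at a real point near $\xi$ and bound the oscillatory cosine factor in Hankel's expansion \eqref{E:asymptotic-est} from below (your identity $|\cos(x+iy)|^2=\cos^2 x+\sinh^2 y$ plays the role of the paper's $\cosh\geq 1$, $\cosh>|\sinh|$ computation), with $j_\nu(0)=1/\Gamma(\nu+1)\neq 0$ handling small $\|\xi\|$. The one point to tidy is a quantifier slip: your threshold $R_0$ depends on $A$ through the left endpoint $t(\|\xi\|-A\log(2+\|\xi\|))$ of the probe interval, so ``choose $A$ with $A\log 2\geq R_0$'' is circular as written; since enlarging the ball only increases the supremum in \eqref{eq:slowdecay}, just probe within a fixed distance of $\xi$ (e.g.\ only in $[t\|\xi\|,\,t\|\xi\|+\pi]$, mirroring the paper's use of $\pi/(2t)$), which makes $R_0$ independent of $A$ and closes the argument.
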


\begin{proof}  In view of the preceding theorem we just need  to prove that the Fourier transform $j_\nu(t(\zeta\cdot\zeta)^{1/2})$ is slowly decreasing.  

We will first show that $j_\nu(z)$ is slowly decreasing as a function of $z\in\mathbb C$.
For this, we will use the asymptotic estimate \eqref{E:asymptotic-est}.  Observing that $|\sin z|\leq 1+|\cos z|$ for any $z\in \mathbb C$,  the estimate shows that it is enough  to prove that
$$
\left(\cos (z-(\nu+1/2)(\pi/2)\right)
$$
is slowly decreasing in $\mathbb C$. But for any real number $x$, we have
\begin{multline*}
\left(\cos (x-(\nu+1/2)(\pi/2)\right)=
 \cos\left(x-(\text{Re}\,\nu+1/2)\pi/2\right)\,\cosh\left((\pi\,\text{Im}\,\nu)/2\right)\\
\qquad\qquad\qquad\qquad\qquad -i\,\sin\left(x-(\text{Re}\,\nu+1/2)\pi/2\right)\,\sinh\left((\pi\,\text{Im}\,\nu)/2\right)
\end{multline*}
Since $\cosh((\pi\,\text{Im}\,\nu)/2)\geq 1$ and $\cosh((\pi\,\text{Im}\,\nu)/2)> |\sinh((\pi\,\text{Im}\,\nu)/2)|$, it is clear that there is a real number $s$ with $|s-x|<\pi/2$ such that the right hand side above has modulus $>1/2$.  

It follows that there is a constant $c>0$ such that for any $x\in\mathbb R$, there is an $s\in\mathbb R$ within $\pi/2$ distance units of $x$ for which
$$
|j_\nu(s)|\geq c\,\left(1+|x|\right)^{-\text{Re}\,\nu-1/2}.
$$
This establishes the slow decrease of $j_\nu(z)$.

Now the argument above in fact shows that for any $\xi\in\mathbb R^n$, there is a point $\xi'\in\mathbb R^n$ within $\pi/(2t)$ distance units of $\xi$ such that $|j_\nu(t\,\|\xi'\|)|\geq c\,\left(1+\|t\,\xi\|\right)^{-\text{Re}\,\nu-1/2}$.  An appropriately large  constant $A$ can be then be found to establish the slow decrease condition \eqref{eq:slowdecay} for $j_\nu(t\,(\zeta\cdot\zeta)^{1/2})$. 
\end{proof}

\begin{remark}\label{rem:surj1}
    By \eqref{E:malphaFT}, the theorem above implies that the convolution operator $\varphi\mapsto \varphi*m^t_\alpha$ on $\mathscr E(\mathbb R^n)$ is surjective whenever $\alpha\notin -n/2-\mathbb Z^+$ and $t\neq 0$.
    This extends some of the results of \cite{CGK2017}, where
    it was shown that spherical mean value operators are surjective
    on $\mathscr{E}(\mathbb{R}^n)$ (this corresponds to $\alpha=0$).
    As mentioned earlier, for  $\alpha=1$, the convolution with $m^1_t$ corresponds to taking mean values over
    balls of radius $|t|$, and these mean value operators are thus also surjective on $\mathscr E(\mathbb R^n)$.  
\end{remark}
\begin{remark}\label{rem:surj2}
The operator $\varphi\mapsto \varphi*S^t_\nu$ on $\mathscr E(\mathbb R^n)$ is not injective.  Indeed, if the two values of $t\,(\zeta\cdot\zeta)^{\frac 12}$ are zeros of $j_\nu$, then the function $\varphi(x)=e^{i(\zeta\cdot x)}$ belongs to its kernel.  
\end{remark}
\begin{corollary}\label{T:one-snapshot}
Suppose that $\alpha\notin -n/2-\mathbb Z^+$, and fix $t_0>0$.  For any $g\in\mathscr E(\mathbb R^n)$,  there is a $C^\infty$ solution $u(x,t)$ to the generalized EPD equation \eqref{E:EPDgen} which is even in $t$, and such that $u(x,t_0)=g(x)$ for all $x\in\mathbb R^n$.
\end{corollary}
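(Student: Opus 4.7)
The plan is to reduce the corollary to the surjectivity result already in hand. By Theorem~\ref{T:EPDsoln}, for each $f\in\mathscr E(\mathbb R^n)$ the function $u(x,t):=(f*m_\alpha^t)(x)$ is a solution of the Cauchy problem \eqref{E:EPDgen}--\eqref{E:EPDCauchy}, and it is automatically even in $t$ because $m_\alpha^t=m_\alpha^{-t}$ (the distribution $m_\alpha^t$ extends to an even $C^\infty$ function of $t$, as noted after \eqref{E:mt-polar}). So the existence problem becomes: find $f\in\mathscr E(\mathbb R^n)$ with
\[
 f*m_\alpha^{t_0}=g.
\]

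First I would invoke Remark~\ref{rem:surj1} (which itself is a direct consequence of Theorem~\ref{T:s-nu-surjectivity} applied with $\nu=\alpha+(n-2)/2$ and the relation $S^{t_0}_\nu=m^{t_0}_\alpha/\Gamma(\alpha+n/2)$). Since $\alpha\notin -n/2-\mathbb Z^+$ and $t_0\neq 0$, the convolution operator $\varphi\mapsto \varphi*m_\alpha^{t_0}$ is surjective on $\mathscr E(\mathbb R^n)$. Hence there exists $f\in\mathscr E(\mathbb R^n)$ with $f*m_\alpha^{t_0}=g$.

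Setting $u(x,t):=(f*m_\alpha^t)(x)$ then gives the required function: by Theorem~\ref{T:EPDsoln} it is a $C^\infty$ solution of \eqref{E:EPDgen} which is even in $t$, and at $t=t_0$ it equals $g$. There is essentially no obstacle here beyond checking that the earlier surjectivity hypothesis applies; the main work was already done in establishing Theorem~\ref{T:s-nu-surjectivity}. Note also that this $u$ is far from unique, since by Remark~\ref{rem:surj2} the convolution operator has a nontrivial kernel, so any $f$ can be modified by an element of the kernel of $\varphi\mapsto\varphi*m_\alpha^{t_0}$ to produce another solution with the same snapshot at $t_0$; this nonuniqueness is precisely what motivates the two-snapshot problem studied in Section~\ref{sec:twosnapshot}.
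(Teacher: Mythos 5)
Your proof is correct and follows exactly the route the paper takes: apply the surjectivity of $\varphi\mapsto\varphi*m_\alpha^{t_0}$ (Theorem~\ref{T:s-nu-surjectivity} via Remark~\ref{rem:surj1}) to solve $f*m_\alpha^{t_0}=g$, then let $u(x,t)=(f*m_\alpha^t)(x)$, which is an even $C^\infty$ solution by Theorem~\ref{T:EPDsoln}. The remark about nonuniqueness via Remark~\ref{rem:surj2} also matches the paper's observation.
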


This result follows immediately from Theorem \ref{T:EPDsoln}, equation \eqref{E:malphaFT}, and Theorem~\ref{T:s-nu-surjectivity} and the subsequent Remark~\ref{rem:surj1}.  Note that by Remark~\ref{rem:surj2}, this solution $u(x,t)$ is not unique.

\section{The two-snapshot problem for the generalized EPD equation}
\label{sec:twosnapshot}

Corollary~\ref{T:one-snapshot} can be construed as an existence theorem for a solution to the one-snapshot problem associated with the generalized EPD equation \eqref{E:EPDgen}. Let us now consider the two-snapshot problem, as described in the Introduction, which we reprise as follows.  Here we will assume that $\alpha\notin -n/2-\mathbb Z^+$, and as 
usual $\nu=\alpha+(n-2)/2$.

Assume that $r$ and $s$ are nonzero, $r\neq s$ and suppose that $g,\,h\in\mathscr E(\mathbb R^n)$ satisfy the compatibility condition
\begin{equation}\label{E:compatibility2}
g*m^r_\alpha=h*m^s_\alpha
\end{equation}
Does there exist an $f\in\mathscr E(\mathbb R^n)$ such that $f*m^s_\alpha=g$ and $f*m^r_\alpha=h$?  If such an $f$ exists, is it unique?

Since the distributions $m_\alpha^t$ are even in $t$, in what follows we can assume that both $r$ and $s$ are positive.

We first address the uniqueness question.  For this we observe that if $r/s$ is $j_\nu$-rational, there exists a $z\in\mathbb C$ such that $j_\nu(rz)=0$ and $j_\nu(sz)=0$.  For any $\zeta\in\mathbb C^n$ such that $\zeta\cdot\zeta=z^2$, \eqref{E:malphaFT} shows that the function $f(x)=e^{i\,\zeta\cdot x}$ satisfies $f*m^r_\alpha=0$ and $f*m^s_\alpha=0$. 

On the other hand, if $r/s$ is not $j_\nu$-rational, we do have uniqueness.
\begin{theorem}\label{T:2snapshotuniqueness}
Suppose that $r/s$ is not $j_\nu$-rational.  If $f*m^r_\alpha=0$ and $f*m^s_\alpha=0$, then $f=0$.
\end{theorem}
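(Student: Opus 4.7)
The plan is to reduce the uniqueness statement to showing that the Fourier-Laplace symbols $\widetilde{m^r_\alpha}$ and $\widetilde{m^s_\alpha}$ have no common zero in $\mathbb{C}^n$, and then to invoke a spectral-synthesis argument for radial compactly supported distributions to conclude $f=0$.

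The first step is purely complex-analytic. By \eqref{E:malphaFT}, any common zero $\zeta \in \mathbb{C}^n$ of the two symbols would satisfy $j_\nu(rw) = 0$ and $j_\nu(sw) = 0$ with $w = (\zeta\cdot\zeta)^{1/2}$. Since $\nu$ is not a negative integer, $j_\nu(0) = 1/\Gamma(\nu+1) \neq 0$, forcing $w \neq 0$. But then $rw$ and $sw$ are nonzero zeros of $j_\nu$ whose ratio is $r/s$, making $r/s$ a $j_\nu$-rational number and contradicting the hypothesis. So no common zero exists.

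For the second step, I would invoke the spectral-synthesis theorem of Brown-Schreiber-Taylor for radial compactly supported distributions on $\mathbb{R}^n$: any closed translation-invariant subspace of $\mathscr{E}(\mathbb{R}^n)$ cut out as the simultaneous kernel of convolutions with radial distributions is the closed linear span of the polynomial-exponential functions it contains, and those functions are parametrized precisely by the common zero variety of the defining symbols. With that variety empty, the joint kernel is $\{0\}$, so $f = 0$. A more self-contained alternative is to expand $f$ in spherical harmonics, $f(x) = \sum_{k,m} f_{k,m}(|x|)Y_{k,m}(x/|x|)$; because $m^t_\alpha$ is $SO(n)$-equivariant, each isotypic component is annihilated by both convolutions separately, and on each component the problem reduces via a Hankel-type transform to a one-variable statement to which the classical Schwartz spectral synthesis theorem applies.

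I expect the main obstacle to be the rigorous execution of the second step, since the passage from emptiness of the common complex zero set to triviality of the joint kernel in $\mathscr{E}(\mathbb{R}^n)$ is not automatic for general compactly supported distributions and genuinely requires either the Brown-Schreiber-Taylor machinery or an equivalent reduction to the one-variable theory. The first step, by contrast, is immediate once the definition of $j_\nu$-rational is unpacked.
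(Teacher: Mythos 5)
Your proposal is correct in its main line, but it takes a genuinely different route from the paper. Your first step (no common zeros of $j_\nu(rz)$ and $j_\nu(sz)$, using $j_\nu(0)=1/\Gamma(\nu+1)\neq 0$) is exactly the paper's starting point. From there the paper follows Zalcman \cite{Zalcman1972}: it invokes Schwartz's one-variable theorem \cite{Schwartz1948} to conclude that the closed ideal generated by $j_\nu(rz)$ and $j_\nu(sz)$ in the one-dimensional Paley--Wiener algebra $\mathbb E_1'$ is dense, and then transfers this to $n$ variables by the substitution $z=(\zeta\cdot\zeta)^{1/2}$, using the estimates \eqref{E:zeta-estimate} to control the topology; this yields $S^r_\nu*\Phi_\lambda+S^s_\nu*\Psi_\lambda\to\delta_0$ in $\mathscr E'(\mathbb R^n)$, and convolving with $f$ gives $f=0$. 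That argument is more self-contained (only one-variable Schwartz plus Paley--Wiener bounds) and proves the stronger fact that the closed convolution ideal generated by $m^r_\alpha$ and $m^s_\alpha$ is all of $\mathscr E'(\mathbb R^n)$, which is also the mechanism behind the existence theorem later in the paper. Your route instead outsources the hard step to Brown--Schreiber--Taylor spectral synthesis for motion-invariant subspaces (in fact only spectral \emph{analysis} is needed: a nonzero closed motion-invariant subspace must contain an exponential, which would force a common zero of the two symbols). That is a legitimate and shorter deduction, and you are right to stress that radiality is essential: by Gurevich's counterexamples, emptiness of the common zero variety does not imply triviality of the joint kernel for general compactly supported distributions when $n\geq 2$, so the motion-invariance is doing real work.

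One caveat: your ``more self-contained alternative'' via spherical harmonics is not yet a proof as stated. After separating variables, convolution with the radial distribution acts on each isotypic component through a Bessel (Delsarte/Hankel-type) generalized translation, not through ordinary convolution on $\mathbb R$, so the classical Schwartz synthesis theorem does not apply verbatim; you would need either a Schwartz-type theorem for that generalized translation structure or precisely the kind of symbol substitution $z\mapsto(\zeta\cdot\zeta)^{1/2}$ that the paper uses to pull the problem back to genuine one-variable convolution. If you intend that alternative as your actual argument, this gap must be filled; if you rely on Brown--Schreiber--Taylor, the proof stands.
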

This result is essentially proved in Theorem 3 in Zalcman's two-radius Pompeiu paper \cite{Zalcman1972}, although it is stated for $n=2$ and has a slightly different formulation. For completeness, we supply a proof adapted to our purposes.
\begin{proof}(Zalcman)
Let $\mathbb E'_n$ be the vector space consisting of the Fourier transforms of elements of $\mathscr E'(\mathbb R^n)$.  Then the elements of $\mathbb E'_n$ are precisely the entire functions of exponential type on $\mathbb C^n$ which are polynomially increasing in $\mathbb R^n$.  We topologize $\mathbb E'_n$ so that the Fourier transform is a homeomorphism.  This topology can be characterized as follows.

Let $K$ denote the set of functions on $\mathbb C^n$ of the form $k(z)=k_1(\|\text{Re}\,\zeta\|)\,k_2(\|\text{Im}\,\zeta\|)$, where $k_1$ and $k_2$ are positive, continuous, and increasing, $k_1$ grows faster than any polynomial, and $k_2$ grows faster than any linear exponential. According to \cite{Ehrenpreis1970}, p.~156, a neighborhood basis of $0$ in $\mathbb E'_n$ consists of the sets 
$$
N_k=\{F\in\mathbb E'_n\;:\;\sup_\zeta |F(\zeta)|/k(\zeta)<1\}
$$
as $k$ runs through $K$.

Now suppose that $r/s$ is not $j_\nu$-rational.  Then $j_\nu(rz)$ and $j_\nu(sz)$ have no common roots.  According to \cite{Schwartz1948}, this implies that the closure of the ideal generated by $j_\nu(rz)$ and $j_\nu(sz)$ in $\mathbb E'_1$ is $\mathbb E'_1$ itself.  Hence there is a net $\{H_\lambda\}_{\lambda\in\Lambda}$, where $\Lambda$ is a directed set, such that $H_\lambda\to 1$ in $\mathbb E'_1$, where
$$
H_\lambda(z)=A_\lambda(z)\,j_\nu(rz)+B_\lambda(z)\,j_\nu(sz),\qquad\text{with }A_\lambda,\;B_\lambda\in \mathbb E'_1,
$$
for all $\lambda\in\Lambda$.  In the above, we can take $A_\lambda$ and $B_\lambda$ to be even functions of $z$.

For any $k\in K$, we conclude that $H_\lambda-1\in N_k$ (in $\mathbb E_1'$) for all sufficiently ``large'' $\lambda$; that is to say
\begin{equation}\label{E:N_k-est}
\sup_{z\in\mathbb C} \frac{|H_\lambda(z)-1|}{k_1(|\text{Re}\,z|)\,k_2(|\text{Im}\,z|)}<1\qquad\text{for all ``large'' }\lambda.
\end{equation}
Now for each $\lambda\in\Lambda$, put $a_\lambda(\zeta)=A_\lambda((\zeta\cdot\zeta)^{1/2}),\; b_\lambda(\zeta)=B_\lambda((\zeta\cdot\zeta)^{1/2})$, and $h_\lambda(\zeta)=H_\lambda((\zeta\cdot\zeta)^{1/2})$.  By \eqref{E:zeta-estimate} and the Paley-Wiener theorem for $\mathscr E'(\mathbb R^n)$, $a_\lambda,\,b_\lambda$ and $h_\lambda$ belong to $\mathbb E'_n$.  Moreover, \eqref{E:N_k-est} and \eqref{E:zeta-estimate} imply that
$$
\sup_{\zeta\in\mathbb C^n} \frac{|h_\lambda(\zeta)-1|}{k_1(\|\text{Re}\,\zeta\|)\,k_2(\|\text{Im}\,\zeta\|)}<1.
$$
Since $k\in K$ is arbitrary, it follows that $h_\lambda\to 1$ in $\mathbb E'_n$.  We conclude that the closed ideal in $\mathbb E_n'$ 
generated by $j_\nu(r(\zeta\cdot\zeta)^{1/2})$ and $j_\nu(s(\zeta\cdot\zeta)^{1/2})$ is $\mathbb E_n'$ itself.

Consequently, the closed ideal in the convolution algebra $\mathscr E'(\mathbb R^n)$ generated by $S_\nu^r$ and $S^s_\nu$ is $\mathscr E'(\mathbb R^n)$ itself.  More precisely, if we let $\Phi_\lambda$ and $\Psi_\lambda$ in $\mathscr E'(\mathbb R^n)$ be the inverse Fourier transforms of $a_\lambda$ and $b_\lambda$, respectively, then
\begin{equation}\label{E:conv-convergence}
S^r_\nu*\Phi_\lambda+S^s_\nu*\Psi_\lambda\to\delta_0.
\end{equation}
Our assumption is that $f*m^r_\alpha=0$ and $f*m^s_\alpha=0$.  By \eqref{E:malphaFT}, this implies that $f*S^r_\nu=0$ and $f*S^s_\nu=0$.  Taking the convolution of both sides of \eqref{E:conv-convergence} with $f$, we obtain $f=0$, proving the theorem.
\end{proof}

%Let us now consider the existence question.  Our aim is to prove that if $s/t$ is neither $j_\nu$-rational nor $j_\nu$-Liouville, then for any $g,\,h\in\mathscr E(\mathbb R^n)$ satisfying the compatibility condition \eqref{E:compatibility2}, there is an $f\in\mathscr E(\mathbb R^n)$ such that $f*m^s_\alpha=g$ and $f*m^t_\alpha=h$.

Let us now consider the existence question.  Our aim is to prove that if $r/s$ is neither $j_\nu$-rational nor $j_\nu$-Liouville, then for any $g,\,h\in\mathscr E(\mathbb R^n)$ satisfying the compatibility condition \eqref{E:compatibility2}, there is an $f\in\mathscr E(\mathbb R^n)$ such that $f*m^r_\alpha=g$ and $f*m^s_\alpha=h$.  For this, the lemma below will be useful.  Let $Z$ denote the set of zeros of $j_\nu$.

\begin{lemma}\label{T:nearestzero}
Let $r$ and $s$ be positive real numbers, $r\neq s$, and suppose that $r/s$ is neither $j_\nu$-rational nor $j_\nu$-Liouville.  Then there exists a positive constant $C$ and a positive integer $N$ such that 
$$
|rz-a|+|sz-b|\geq C(1+|z|)^{-N}
$$
for all $z\in\mathbb C$ and all $a$ and $b$ in $Z$.
\end{lemma}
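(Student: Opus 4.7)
The core reduction is an algebraic identity. Writing
\[
rb-sa \;=\; r(b-sz)+s(rz-a),
\]
one obtains
\[
|rb-sa|\;\le\;\max(r,s)\bigl(|rz-a|+|sz-b|\bigr),
\]
so the lemma follows once $|rb-sa|$ is bounded below in the required way. Since $r/s$ is neither $j_\nu$-rational nor $j_\nu$-Liouville, the negation of the Liouville condition supplies an integer $N_0$ such that
\[
\bigl|\tfrac{r}{s}-\tfrac{a}{b}\bigr|\;\ge\;\tfrac{1}{|b|^{N_0}}
\qquad\text{for every }a,b\in Z\text{ with }|b|>1,
\]
and multiplying through by $s|b|$ gives $|rb-sa|\ge s|b|^{1-N_0}$. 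This is the one-sided diophantine input that drives the whole argument.

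I would then split into the regimes $|z|\le R$ and $|z|\ge R$ for a suitably large $R$. In the \emph{large $|z|$} regime the trivial case is $|rz-a|+|sz-b|\ge 1$, which already satisfies the desired inequality. Otherwise $|a|\le r|z|+1$ and $|b|\le s|z|+1$; the possibility $|b|\le 1$ can be excluded once $|z|>2/s$, because then $|sz-b|\ge s|z|-1\ge 1$. For the remaining range $|b|>1$ one inserts the bound $|b|\le s|z|+1$ into $|rb-sa|\ge s|b|^{1-N_0}$ to get
\[
|rz-a|+|sz-b|\;\ge\;\frac{|rb-sa|}{\max(r,s)}\;\ge\;\frac{C_1}{(1+|z|)^{N_0-1}}.
\]

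In the \emph{bounded $|z|$} regime one again reduces to the case $|rz-a|+|sz-b|<1$, which confines $(a,b)$ to a finite subset of $Z\times Z$ (the zero set $Z$ is discrete by Section~\ref{sec:Besselprelims}). On this finite set the continuous function $z\mapsto|rz-a|+|sz-b|$ can only vanish at a point where $rz_0=a$ and $sz_0=b$; since $j_\nu(0)=1/\Gamma(\nu+1)\ne 0$ under the standing hypothesis $\nu=\alpha+(n-2)/2\notin -1-\mathbb Z^+$, both $a$ and $b$ are nonzero and $r/s=a/b$ would be $j_\nu$-rational, contrary to assumption. Hence a uniform positive lower bound exists on $\{|z|\le R\}$, completing the proof after enlarging $N$ (take $N=N_0$) and shrinking $C$.

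The only genuinely delicate point is extracting the exponent $N_0$ from the failure of the Liouville property and matching it to the size of $|b|$ — i.e., translating an inequality phrased in terms of $a,b$ into one phrased in terms of $z$. The bound $|b|\le s|z|+1$ in the ``small difference'' case is what effects this translation cleanly, and verifying $0\notin Z$ (to guarantee $b\ne 0$) is the small technical hypothesis check on which the whole argument quietly rests.
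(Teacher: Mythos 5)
Your proof is correct, and it takes a genuinely different route from the paper's. The paper splits into two cases according to whether $\nu$ is real: for non-real $\nu$ it uses the asymptotic location \eqref{E:largezeros} of the zeros to show the rescaled zero sets $(1/r)Z$ and $(1/s)Z$ are uniformly separated (needing only that $r/s$ is not $j_\nu$-rational), and for real $\nu$ it reduces to real $z$ and real zeros and bounds $|b|$ by choosing the zero of $j_\nu$ whose rescaling is nearest to $x$, using the roughly $\pi$-spacing of the zeros. You instead run a single uniform argument: the identity $rb-sa=r(b-sz)+s(rz-a)$ reduces everything to a lower bound on $|rb-sa|$, the negation of the Liouville property supplies $|r/s-a/b|\geq |b|^{-N_0}$ for $|b|>1$ (exactly the paper's \eqref{E:notliouville}, and valid for complex zeros as well, since the definition places no reality restriction on $a,b$), and the crucial bound $|b|\leq s|z|+1$ comes for free in the only nontrivial case $|rz-a|+|sz-b|<1$, replacing the paper's appeal to zero spacing; the bounded-$|z|$ regime is handled by discreteness of $Z$, compactness, and the observation that $0\notin Z$ because $j_\nu(0)=1/\Gamma(\nu+1)\neq 0$ under the standing assumption that $\nu$ is not a negative integer. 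Your version is shorter, avoids any case distinction between real and complex $\nu$ and any use of the fine asymptotics of the zeros beyond discreteness; what it does not deliver is the paper's sharper by-product \eqref{E:lower-est2} --- a constant (rather than polynomially decaying) lower bound in the non-real case under the non-rationality hypothesis alone --- which the paper reuses in Corollary~\ref{T:nu-complex} to show that no $j_\nu$-Liouville numbers exist when $\nu$ is not real. For the lemma as stated, your argument is complete.
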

\begin{proof} 
We will prove the equivalent assertion that there exists a $C>0$ such that
\begin{equation}\label{E:lower-est}
\left|z-\frac a r\right|+\left|z-\frac b s\right|\geq C\,(1+|z|)^{-N},\qquad z\in\mathbb C,\;a,b\in Z.
\end{equation}
Let us first consider the case when $\nu$ is not real.  In this situation all that will be needed to prove the estimate \eqref{E:lower-est} is the assumption that $r/s$ is not $j_\nu$-rational. In fact we assert here that there is a constant $d>0$ (depending on $r,\,s$ and $\nu$) such that
\begin{equation}\label{E:lower-est1}
\left|z-\frac a r\right|+\left|z-\frac b s\right|\geq d\qquad z\in\mathbb C,\;a,b\in Z.
\end{equation}

Let $A=(1/r)\,Z=\{a/r\,|\,a\in Z\}$ and $B=(1/s)\,Z=\{b/s\,|\,b\in Z\}$.  Since $r/s$ is not $j_\nu$-rational, the sets $A$ and $B$ are disjoint.

Now it is clear from the description \eqref{E:largezeros} of the large elements of $Z$ that the elements of $A$ on the right half plane with large modulus tend towards the horizontal line $\text{Im}\,z=\pi\,\text{Im}\,\nu/(2r)$, 
and are spaced approximately $\pi/r$ units apart.  Similarly the 
elements of $B$ on the right half plane with large modulus  tend towards the horizontal line $\text{Im}\,z=\pi\,\text{Im}\,\nu/(2s)$ and are spaced approximately $\pi/s$ units apart.  Since $r\neq s$ and $\text{Im}\,\nu\neq 0$, these horizontal lines are distinct and can be separated by horizontal bands of small width spaced a positive distance apart.  The elements of $A$ and $B$ with large modulus and with positive real part lie inside these horizontal bands.  

Since $j_\nu$ is even, we can describe  the elements of $A$ and $B$ on the left half plane with large modulus in a similar fashion.

Again by \eqref{E:largezeros} we know that there are only finitely many points in $A$ and $B$ with bounded modulus.  All of this shows that there is a $d>0$ such that
\begin{equation}\label{E:lower-est2}
\left|\frac{a}{r}-\frac{b}{s}\right|\geq d
\end{equation}
for all $a,\,b\in Z$.  This implies \eqref{E:lower-est1}.

We now consider the case when $\nu$ is real.  Here we will need the full assumption that $r/s$ is neither $j_\nu$-rational nor $j_\nu$-Liouville.   According to \cite{WatsonTreatiseBessel}, Sec.~15.2, $j_\nu$ has at most finitely many zeros which are not real (and none when $\nu>-1$). Thus we need only prove the estimate \eqref{E:lower-est} for all $z\in\mathbb C$ and all $a$ and $b$ in $Z\cap\mathbb R$. Furthermore, for all $a$ and $b$ in $Z\cap\mathbb R$, we in fact need only establish \eqref{E:lower-est} all $x\in\mathbb R$.  The estimate \eqref{E:lower-est} will then follow for general $z=x+iy \in \mathbb C$, since
\begin{align*}
\left|z-\frac a r\right|+\left|z-\frac b s\right|&\geq \left|x-\frac a r\right|+\left|x-\frac b s\right|\\
&\geq C(1+|x|)^{-N}\\
&\geq C(1+|z|)^{-N}.
\end{align*}

Now since  $r/s$ is not $j_\nu$-Liouville, there is a (large) positive integer $M$ such that
\begin{equation}\label{E:notliouville}
\left|\frac rs-\frac a b\right|\geq |b|^{-M},
\end{equation}
for all $a,\;b\in Z$, with $|b|>1$.  If $x\in\mathbb R$ this gives
\begin{equation}\label{E:lower-est3}
\left|x-\frac a r\right|+\left|x-\frac b s\right|\geq \left|\frac a r-\frac b s\right|=\frac {|b|}r\,\left|\frac{a}{b}-\frac rs\right|\geq \frac 1r\,|b|^{1-M}.
\end{equation}
The description \eqref{E:largezeros} shows that there is a large $R>0$ such that the elements of $A$ with modulus $>R$ are spaced no more than $2\pi/r$ units apart and the elements of $B$ with modulus $>R$ are spaced no more than $2\pi/s$ units apart.

Fix $x\in\mathbb R$.  We need only prove \eqref{E:lower-est} for $|x|>R$. Let $a/r$ and $b/s$ be points in $A$ and $B$, respectively, with $a$ and $b$ real, that are closest to $x$.   
%The condition \eqref{E:notliouville} shows that
%$$
%\left|x-\frac a r\right|+\left|x-\frac b s\right|\geq \left|\frac a r-\frac b s\right|=\frac {|b|}r\,\left|\frac{a}{b}-\frac rs\right|\geq \frac 1r\,|b|^{1-M}.
%$$
By our choice of $R$, we have $|x-b/s|\leq 2\pi/s$, so $|b|\leq s\,|x|+2\pi$, and so \eqref{E:lower-est3} gives
$$
\left|x-\frac a r\right|+\left|x-\frac b s\right|\geq \frac{1}{r}\,(2\pi+s\,|x|)^{1-M}.
$$
Putting $N=M-1$ and adjusting the constant $C$, in part to account for the case $|x|<R$, this proves \eqref{E:lower-est}, and the lemma.
\end{proof}

\begin{corollary}\label{T:nu-complex}
Suppose that $\nu$ is not real.  Then there are no $j_\nu$-Liouville numbers.
\end{corollary}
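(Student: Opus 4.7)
The plan is to exploit the fact that the proof of Lemma~\ref{T:nearestzero} in the non-real case already delivers a much stronger bound than the lemma itself advertises: equation~\eqref{E:lower-est2} produces a uniform positive constant $d$ with $|a/r-b/s|\geq d$ for every pair $a,b\in Z$, and the only hypothesis actually used there is that $r/s$ be not $j_\nu$-rational. That uniform separation is incompatible with having $a/b$ converge to a real target $w$ at an arbitrarily fast polynomial rate in $1/|b|$, which is exactly what the Liouville condition demands.

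To carry this out I argue by contradiction: assume $\nu$ is not real and that $w$ is $j_\nu$-Liouville. The paragraph preceding the corollary already notes that $w\in\mathbb R$. I then dispatch three degenerate subcases. The value $w=0$ is impossible because $j_\nu(0)=1/\Gamma(\nu+1)\neq 0$ forces $Z$ to be bounded away from the origin, so $|0-a/b|\geq c/|b|$ contradicts the Liouville bound for $N\geq 2$. The values $w=\pm 1$ are impossible because $1=b/b$ and $-1=(-b)/b$, using the evenness $-Z=Z$, are $j_\nu$-rational and hence excluded by the first clause of the definition. Finally, since $-Z=Z$, the map $(a,b)\mapsto(-a,b)$ sends an $N$-th order Liouville approximation of $w$ to one of $-w$, so I may assume $w>0$ and $w\neq 1$.

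With that reduction in hand, I invoke the non-real branch of the Lemma~\ref{T:nearestzero} proof with $r=w$, $s=1$. Since $w$ is not $j_\nu$-rational, \eqref{E:lower-est2} supplies a constant $d>0$ with $|a/w-b|\geq d$ for every $a,b\in Z$; using the identity $|a-wb|=|b|\,|a/b-w|$ this rearranges to $|w-a/b|\geq wd/|b|$. Combining this with the Liouville inequality $|w-a/b|<|b|^{-N}$ forces $|b|^{N-1}<1/(wd)$, so as $N$ grows the permitted $|b|$ is confined to a bounded range. The associated set of admissible pairs $(a,b)$ is therefore finite, and the minimum $\delta=\min|w-a/b|$ over that finite set is strictly positive since $w$ is not $j_\nu$-rational. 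Taking $N$ large enough that $|b|^{-N}<\delta$ throughout the allowed range of $|b|$ closes the contradiction.

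The main obstacle is conceptual rather than technical: Lemma~\ref{T:nearestzero} as stated cannot be applied off the shelf, because its hypothesis already excludes $r/s$ from being Liouville, which is precisely the situation under analysis. The real content is to recognize that the non-real $\nu$ branch of that lemma's proof silently establishes a uniform lower bound from the bare non-$j_\nu$-rationality of $r/s$; extracting that intermediate estimate and redeploying it with $r=w$, $s=1$ is what makes the argument close.
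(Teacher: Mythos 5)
Your proposal is correct and follows essentially the same route as the paper: both arguments extract the intermediate estimate \eqref{E:lower-est2} from the non-real-$\nu$ branch of the proof of Lemma~\ref{T:nearestzero} (noting it needs only that the ratio is not $j_\nu$-rational), specialize to $s=1$, rearrange to a lower bound of the form $|w-a/b|\geq dw/|b|$, and conclude that the Liouville approximation property fails for large $N$. Your explicit treatment of the degenerate values $w=0,\pm 1$ and the reduction to $w>0$ via evenness of the zero set only makes explicit what the paper handles in passing (``non-zero,'' ``$r\neq 1$,'' ``assume $r$ positive'').
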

\begin{proof}
We already know that any $j_\nu$-Liouville number must be real.  So suppose that $r$ is a non-zero real number and that $r$ is not $j_\nu$-rational.  We may assume that $r$ is positive.  Note that $r\neq 1$.  The inequality  \eqref{E:lower-est2}, with $s=1$, shows that
$$
\left|r-\frac{a}{b}\right|\geq \frac{dr}{|b|}
$$
for all $a,\,b\in Z$.  (The inequality \eqref{E:lower-est2} only assumes that $r=r/1$ is not $j_\nu$-rational.) From this it is clear that $r$ is not $j_\nu$-Liouville. 
\end{proof}

% \begin{remark}
%     Note also that if $\nu$ is not real, then there are at most finitely many real $j_\nu$-rational numbers. By the asymptotics of the zeros of $j_\nu$ only the finitely many roots close to the origin are not separated and 
%     distributed around the line
%     $\mathrm{Im}(z)=\mathrm{Im}(\nu)$.
%     Therefore only finitely many roots $\alpha$, 
%     $\beta$ (if any) can have $\mathrm{Im}(\alpha/\beta)=0$.
% \end{remark}
% \edz{Jens: It is ok to have this remark here instead of as a Theorem?}

Below is our main existence result.

\begin{theorem}\label{thm:besselnonLiouville}
  Given $g,h$ in $\mathscr{E}(\mathbb{R}^n)$ and
  $r,s>0$ and assume that $g*m_\alpha^r=h*m_\alpha^s$.
  If $r/s$ is neither
  $j_{\nu}$-rational nor $j_{\nu}$-Liouville,
  then there is a unique $f\in \mathscr{E}(\mathbb{R}^n)$ for which
  $g=f*m_\alpha^s$ and $h=f*m_\alpha^r$.
\end{theorem}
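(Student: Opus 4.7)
The plan is to combine uniqueness, which is immediate from Theorem \ref{T:2snapshotuniqueness} since $r/s$ is not $j_\nu$-rational, with a constructive existence proof based on a B\'ezout identity in the convolution algebra $\mathscr{E}'(\mathbb{R}^n)$. Specifically, I would first produce distributions $\Phi, \Psi \in \mathscr{E}'(\mathbb{R}^n)$ satisfying
\begin{equation*}
\Phi * m_\alpha^s + \Psi * m_\alpha^r = \delta_0,
\end{equation*}
and then set $f := \Phi*g + \Psi*h$. The inclusion $\mathscr{E}'*\mathscr{E}\subset\mathscr{E}$ guarantees $f\in\mathscr{E}(\mathbb{R}^n)$, and using the compatibility hypothesis to exchange $h*m_\alpha^s$ for $g*m_\alpha^r$ yields
\begin{equation*}
f*m_\alpha^s = \Phi*g*m_\alpha^s + \Psi*h*m_\alpha^s = \Phi*g*m_\alpha^s + \Psi*g*m_\alpha^r = g*(\Phi*m_\alpha^s + \Psi*m_\alpha^r) = g,
\end{equation*}
and symmetrically $f*m_\alpha^r = h$.

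To produce $\Phi$ and $\Psi$, I would pass to the Paley--Wiener side. The task becomes to find entire functions $\widetilde\Phi, \widetilde\Psi$ on $\mathbb{C}^n$ of exponential type and polynomial growth on $\mathbb{R}^n$ such that, up to the constant $\Gamma(\alpha+n/2)^{-1}$,
\begin{equation*}
\widetilde\Phi(\zeta)\,j_\nu\bigl(s(\zeta\cdot\zeta)^{1/2}\bigr) + \widetilde\Psi(\zeta)\,j_\nu\bigl(r(\zeta\cdot\zeta)^{1/2}\bigr) \equiv \text{const}.
\end{equation*}
The two factors have no common zeros on $\mathbb{C}^n$ (since $r/s$ is not $j_\nu$-rational), so by a H\"ormander--Ehrenpreis type division theorem for Paley--Wiener ideals it suffices to establish a joint slow-decrease estimate
\begin{equation}\label{E:slow-decrease-plan}
\bigl|j_\nu(s(\zeta\cdot\zeta)^{1/2})\bigr|^2 + \bigl|j_\nu(r(\zeta\cdot\zeta)^{1/2})\bigr|^2 \geq C_1\,(1+\|\zeta\|)^{-N}\,e^{-2B\|\text{Im}\,\zeta\|}
\end{equation}
for suitable constants $C_1, N, B>0$ and all $\zeta \in \mathbb{C}^n$.

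For \eqref{E:slow-decrease-plan}, the inequalities \eqref{E:zeta-estimate} reduce the problem to the one-variable bound
\begin{equation*}
|j_\nu(sz)|^2 + |j_\nu(rz)|^2 \geq C_2\,(1+|z|)^{-N}\,e^{-2B|\text{Im}\,z|}, \qquad z\in \mathbb{C}.
\end{equation*}
To prove this, choose $a, b\in Z$ nearest to $sz$ and $rz$, respectively. The Hankel expansion \eqref{E:asymptotic-est} combined with the connection formula \eqref{E:Bessel-connection} shows that near a large simple zero $a$ one has $|j_\nu(w)|\asymp |j_\nu'(a)|\,|w-a|$ with $|j_\nu'(a)|\asymp |a|^{-\text{Re}\,\nu-1/2}$, while at points $w$ bounded away from $Z$ one has the direct Hankel lower bound $|j_\nu(w)| \gtrsim |w|^{-\text{Re}\,\nu-1/2}e^{|\text{Im}\,w|}$. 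Feeding Lemma \ref{T:nearestzero} into these asymptotics is precisely where the non-Liouville hypothesis enters: it converts the distance-to-zero bound $|sz-a|+|rz-b|\geq C(1+|z|)^{-N'}$ into the sought-after polynomial-times-exponential lower bound on $|j_\nu(sz)|^2+|j_\nu(rz)|^2$.

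The main obstacle is executing this last step with uniform constants. The asymptotic $|j_\nu(w)|\asymp |j_\nu'(a)|\,|w-a|$ is valid only in a neighborhood of each zero whose size depends on the Hankel error terms, and one must stitch together the regimes (near small zeros, near large zeros, between zeros, and at arbitrary imaginary height) into a single estimate with constants independent of the particular zero. Once \eqref{E:slow-decrease-plan} is secured the division theorem delivers $\widetilde\Phi, \widetilde\Psi$, Paley--Wiener for $\mathscr{E}'(\mathbb{R}^n)$ returns $\Phi, \Psi$, and the algebraic verification at the start of the plan produces the required $f$.
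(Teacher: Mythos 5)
Your proposal follows essentially the same route as the paper: uniqueness from Theorem \ref{T:2snapshotuniqueness}, existence via a B\'ezout identity $\Phi*m_\alpha^s+\Psi*m_\alpha^r=\delta_0$ supplied by the Ehrenpreis--H\"ormander division theorem for $\mathscr{E}'(\mathbb{R}^n)$, reduced through \eqref{E:zeta-estimate} to the one-variable lower bound $|j_\nu(rz)|+|j_\nu(sz)|\geq C(1+|z|)^{-N}$, which is exactly where Lemma \ref{T:nearestzero} and the non-Liouville hypothesis enter. The only difference is the uniformity issue you flag near the zeros: the paper avoids the derivative asymptotics $|j_\nu(w)|\asymp|j_\nu'(a_m)|\,|w-a_m|$ altogether by applying a minimum-modulus argument to $j_\nu(z)/(z-a_m)$ on fixed-size boxes around the approximate zeros $b_m$, which yields the needed bound $|j_\nu(z)|\geq C|z|^{-1/2-\mathrm{Re}\,\nu}|z-a_m|$ with uniform constants.
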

Notice that the uniqueness statement for the case $\nu=0$, in which case
the involved operators are spherical mean value operators, is exactly
the two-radius Pompeiu theorem by Zalcman.

The following result shows why it is important to have ratios which are neither $j_\nu$-rational nor $j_\nu$-Liouville.

\begin{theorem}\label{thm:besselLiouville}
  If $r/s$ is 
  either
  $j_{\nu}$-rational or 
  $j_{\nu}$-Liouville,
  then there are functions $g$ and $h$ in $\mathscr{E}(\mathbb{R}^n)$ satisfying
  $g*m_\alpha^r=h*m_\alpha^s$ yet
  there is no 
  $f\in \mathscr{E}(\mathbb{R}^n)$ for which
  $g=f*m_\alpha^s$ and $h=f*m_\alpha^r$.
\end{theorem}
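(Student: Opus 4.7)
The plan is to split the argument according to whether $r/s$ is $j_\nu$-rational or $j_\nu$-Liouville; in either case I will produce a compatible pair $(g,h)\in\mathscr E(\mathbb R^n)^2$ together with a continuous linear obstruction $\Lambda:\mathscr E(\mathbb R^n)^2\to\mathbb C$ of the form $\Lambda(g',h')=\phi(g')+\psi(h')$ with $(\phi,\psi)\in\mathscr E'(\mathbb R^n)^2$ satisfying $m_\alpha^s*\phi+m_\alpha^r*\psi=0$, so that $\Lambda$ annihilates every image pair $(f*m_\alpha^s,f*m_\alpha^r)$, yet $\Lambda(g,h)\neq 0$. On the Fourier side, the defining relation $(m_\alpha^s)^\sim\,\phi^\sim+(m_\alpha^r)^\sim\,\psi^\sim=0$ is solved by factoring through a common zero variety of $(m_\alpha^r)^\sim$ and $(m_\alpha^s)^\sim$ (rational case), or through an approximation assembled from a Liouville sequence of near-common zeros (Liouville case).

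For the $j_\nu$-rational case, pick zeros $a,b\in Z$ of $j_\nu$ with $a/b=r/s$, set $z_0=a/r=b/s$ and $\zeta_0=(z_0,0,\dots,0)\in\mathbb C^n$, so $(m_\alpha^r)^\sim(\zeta_0)=(m_\alpha^s)^\sim(\zeta_0)=0$ by \eqref{E:malphaFT}. Take $g(x)=e^{i\zeta_0\cdot x}$ and $h=0$; compatibility follows from the eigenfunction property of exponentials under convolution. Writing $H(\zeta)=\zeta\cdot\zeta-z_0^2$, the quotients $(m_\alpha^r)^\sim/H$ and $(m_\alpha^s)^\sim/H$ are entire of Paley--Wiener type (each is holomorphic in $\zeta\cdot\zeta$ with a simple zero at $\zeta\cdot\zeta=z_0^2$) and hence define $m_{\alpha,1}^r,m_{\alpha,1}^s\in\mathscr E'(\mathbb R^n)$. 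Then $\Lambda(g',h')=m_{\alpha,1}^r(g')-m_{\alpha,1}^s(h')$ vanishes on the image because $(m_\alpha^s)^\sim\cdot(m_\alpha^r)^\sim/H=(m_\alpha^r)^\sim\cdot(m_\alpha^s)^\sim/H$, while Taylor expansion of $j_\nu(r\sqrt W)$ around $W=z_0^2$ gives $\Lambda(g,0)=(m_{\alpha,1}^r)^\sim(-\zeta_0)=\Gamma(\alpha+n/2)\,rj_\nu'(rz_0)/(2z_0)$, which is nonzero since $rz_0$ is a simple zero of $j_\nu$.

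For the $j_\nu$-Liouville case, Corollary~\ref{T:nu-complex} forces $\nu$ to be real, so the zeros of $j_\nu$ are real outside a finite exceptional set. Choose pairs $(a_k,b_k)\in Z\times Z$ with $|b_k|\to\infty$ and $|r/s-a_k/b_k|<|b_k|^{-k}$; set $z_k=a_k/r$ and $\zeta_k=(z_k,0,\dots,0)\in\mathbb R^n$. A Taylor expansion of $j_\nu$ around its zero $b_k$, combined with the asymptotic \eqref{E:asymptotic-est} for $j_\nu'$, yields $|(m_\alpha^s)^\sim(\zeta_k)|\leq C\,|b_k|^{C'-k}$, which decays faster than any polynomial in $|\zeta_k|\sim|b_k|$. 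Take $c_k=|(m_\alpha^s)^\sim(\zeta_k)|^{1/2}$, $g(x)=\sum_k c_k e^{i\zeta_k\cdot x}$, and $h=0$; the super-polynomial decay of $c_k$ in $|\zeta_k|$ ensures $g\in\mathscr E(\mathbb R^n)$, and compatibility holds term-by-term since each $(m_\alpha^r)^\sim(\zeta_k)=0$.

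To rule out any $f\in\mathscr E(\mathbb R^n)$ with $f*m_\alpha^s=g$ and $f*m_\alpha^r=0$, the plan is to extract the frequency-$\zeta_k$ amplitude of $f$ by testing against $\phi^{(k,R)}(x)=\phi_0(x/R)e^{-i\zeta_k\cdot x}$ for a fixed $\phi_0\in C_c^\infty(\mathbb R^n)$ with $\int\phi_0\neq 0$. Starting from the identity $\int g\phi^{(k,R)}=\int f\cdot(m_\alpha^s*\phi^{(k,R)})$ and Taylor expanding $\phi_0$ to obtain $(m_\alpha^s*\phi^{(k,R)})(y)=(m_\alpha^s)^\sim(\zeta_k)\phi^{(k,R)}(y)+O(1/R)$, then dividing by $R^n$ and letting $R\to\infty$ forces the relation $c_k\int\phi_0=(m_\alpha^s)^\sim(\zeta_k)\cdot F_k$, where $F_k:=\lim_{R\to\infty}R^{-n}\int f\phi^{(k,R)}$ is the hypothetical Bohr coefficient of $f$ at $\zeta_k$; thus $|F_k|$ is comparable to $|(m_\alpha^s)^\sim(\zeta_k)|^{-1/2}$, growing faster than any polynomial in $|\zeta_k|$. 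On the other hand, $f*m_\alpha^r=0$, combined with an Ehrenpreis--Malgrange-type representation of the mean-periodic kernel of $\cdot*m_\alpha^r$ in $\mathscr E(\mathbb R^n)$ as a superposition of exponential-polynomials at the real zeros of $(m_\alpha^r)^\sim$, forces polynomial growth of such Bohr coefficients --- a contradiction. The main obstacle is rigorously justifying the $R\to\infty$ limit and controlling the Taylor remainder integrated against $f$; this is handled by first using $f*m_\alpha^r=0$ to establish polynomial growth of $f$ itself, which legitimizes both the limit passing and the Bohr extraction.
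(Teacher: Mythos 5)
Your treatment of the $j_\nu$-rational case is essentially the paper's own argument: the same pair $g(x)=e^{i(a/r)x_1}$, $h=0$, the same division of $(m_\alpha^r)^\sim$ and $(m_\alpha^s)^\sim$ by $\zeta\cdot\zeta-(a/r)^2$ to produce a ``stronger compatibility condition,'' and the same evaluation via the simple zero, giving the nonzero value $\Gamma(\alpha+n/2)\,j_\nu'(a)\,r^2/(2a)$. (You should still say a word about why the quotient of the Paley--Wiener function by $\zeta\cdot\zeta-(a/r)^2$ is again of Paley--Wiener type, but this is standard and the paper glosses it too.)

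The $j_\nu$-Liouville case is where there is a genuine gap. Your compatible pair is fine: $g=\sum_k c_k e^{i\zeta_k\cdot x}\in\ker(C_{m_\alpha^r})$, $h=0$, with $c_k=|(m_\alpha^s)^\sim(\zeta_k)|^{1/2}$ superpolynomially small by the Liouville property, so the whole theorem reduces to showing $g\notin C_{m_\alpha^s}\bigl(\ker(C_{m_\alpha^r})\bigr)$ --- and your argument for that does not hold up. First, the claim that $f*m_\alpha^r=0$ forces polynomial growth of $f$ is false: for any zero $a_j$ of $j_\nu$, every exponential $e^{i\zeta\cdot x}$ with complex $\zeta$ satisfying $\zeta\cdot\zeta=(a_j/r)^2$ lies in $\ker(C_{m_\alpha^r})$ (Lemma~\ref{t:eigenspace}), and such exponentials grow exponentially; since this claim is what you use to legitimize the $R\to\infty$ limit, the Bohr-coefficient extraction collapses. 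Second, even for bounded $f$ the Bohr mean $F_k=\lim_{R\to\infty}R^{-n}\int f\,\phi_0(x/R)e^{-i\zeta_k\cdot x}\,dx$ need not exist (that is a property of almost periodic functions, not of general mean-periodic smooth functions), and with polynomial growth of degree $\geq 1$ even your $O(1/R)$ Taylor-remainder estimate no longer closes after dividing by $R^n$. Third, the final contradiction rests on an ``Ehrenpreis--Malgrange-type representation'' of elements of $\ker(C_{m_\alpha^r})$ with polynomially bounded Bohr coefficients; no such expansion theorem with coefficient control is available in several variables (the paper only asserts that the kernel is the \emph{closure} of $\oplus_j\mathscr E_{a_j/r}$, which gives neither well-defined coefficients nor bounds), and this unproved claim is exactly the crux. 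The paper avoids all of this by taking $g=0$ and $h\in\ker(C_{m_\alpha^s})$ outside the range of $C_{m_\alpha^r}$ restricted to that kernel, the non-surjectivity being supplied by the functional-analytic argument of Lemma~3.12 of \cite{CGKW2024}; to repair your version you would need either that lemma or some other rigorous duality/open-mapping argument showing your explicit $g$ is not in the image.
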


For the proof of Theorem~\ref{thm:besselnonLiouville}
% we will use the notion of jointly slowly decreasing functions introduced by Ehrenpreis.
% \begin{definition}
%  Let $\mu,\gamma\in \mathscr{E}'(\mathbb{R}^n)$ and assume there
%   is an $A>0$ such that
%   \begin{equation}
%     \label{eq:jointslowdecay}
%     |\widetilde{\mu}(\zeta)|+|\widetilde{\gamma}(\zeta)| \geq (A+\|\zeta\|)^{-A}\exp(-A\|\mathrm{Im}(\zeta)\|), \qquad \text{for all $\zeta\in \mathbb{C}^n$}.
% \end{equation}
% We then call $\mu,\gamma$ jointly slowly decreasing.
% \end{definition}
we will rely on the following result. 
\begin{lemma}
 Let $S,T\in \mathscr{E}'(\mathbb{R}^n)$ and assume there is an $A>0$ such that
   \begin{equation}
     \label{eq:jointslowdecay}
     |\widetilde{S}(\zeta)|+|\widetilde{T}(\zeta)| \geq (A+\|\zeta\|)^{-A}\exp(-A\|\mathrm{Im}(\zeta)\|), \qquad \text{for all $\zeta\in \mathbb{C}^n$}.
 \end{equation}
If $g,h$ are functions in $\mathscr{E}(\mathbb{R}^n)$ such that
$$
g*S=h*T,
$$
then there is a function $f\in\mathscr{E}(\mathbb{R}^n)$ such that
\begin{equation}\label{E:convideal}
g=f*T\qquad\text{and}\qquad h=f*S.
\end{equation}
\end{lemma}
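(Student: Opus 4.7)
My approach is to recognize this Lemma as a two-generator instance of Ehrenpreis's fundamental principle and to prove it via a Fourier-side division argument. The associativity and commutativity of convolution make the composition
\[
\mathscr{E}(\mathbb{R}^n) \xrightarrow{f \mapsto (f*T,\, f*S)} \mathscr{E}(\mathbb{R}^n)^{2} \xrightarrow{(g,h)\mapsto g*S - h*T} \mathscr{E}(\mathbb{R}^n)
\]
identically zero, so the conclusion of the Lemma is precisely the assertion that this Koszul-type complex is exact in the middle. This reformulation places the problem squarely in the setting where the joint slow decrease condition \eqref{eq:jointslowdecay} is tailor-made to apply.

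To prove exactness, I would pass to Fourier-Laplace transforms via Paley-Wiener. The compatibility $g*S = h*T$ translates to $\widetilde{g}\widetilde{S} = \widetilde{h}\widetilde{T}$ (in the dual, analytic-functional sense, since $g,h$ are merely smooth), and the goal becomes the construction of an analytic functional $\widetilde{f}$ given as a common value
\[
\widetilde{f} \;=\; \widetilde{g}/\widetilde{T} \;=\; \widetilde{h}/\widetilde{S}.
\]
For each $\zeta \in \mathbb{C}^n$, condition \eqref{eq:jointslowdecay} furnishes a point $\zeta'$ at logarithmic distance from $\zeta$ at which either $|\widetilde{S}(\zeta')|$ or $|\widetilde{T}(\zeta')|$ is bounded below by the weight on the right of \eqref{eq:jointslowdecay}, and one uses the corresponding local quotient. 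A Cauchy-type integral representation over a suitable polydisk centered at such a $\zeta'$ then patches the local quotients into a single entire function $\widetilde{f}$ on $\mathbb{C}^n$.

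The main obstacle is this last step: controlling the growth of $\widetilde{f}$ precisely enough for Paley-Wiener to yield $f \in \mathscr{E}(\mathbb{R}^n)$ (and not merely $f \in \mathscr{D}'(\mathbb{R}^n)$). The two factors in \eqref{eq:jointslowdecay} are calibrated exactly for this: the polynomial factor $(A+\|\zeta\|)^{-A}$ gives polynomial growth of $\widetilde{f}$ on $\mathbb{R}^n$, which dualizes to smoothness of $f$, while the exponential weight $e^{-A\|\mathrm{Im}\,\zeta\|}$ matches the exponential-type bound needed to recognize $\widetilde{f}$ as the analytic functional of a smooth function. Once $\widetilde{f}$ has been built with the correct bounds, Fourier inversion yields $f \in \mathscr{E}(\mathbb{R}^n)$, and the identities $\widetilde{f}\widetilde{T} = \widetilde{g}$, $\widetilde{f}\widetilde{S} = \widetilde{h}$ recover $f*T = g$ and $f*S = h$. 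I would either invoke the corresponding division theorem of Ehrenpreis \cite{Ehrenpreis1970} as a black box or adapt the relevant local Cauchy estimate to the present two-generator setting.
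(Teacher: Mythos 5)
Your Fourier-side division plan has a genuine gap: for $g,h\in\mathscr{E}(\mathbb{R}^n)$ the transforms $\widetilde{g},\widetilde{h}$ are not entire functions (indeed not even tempered distributions in general, since elements of $\mathscr{E}$ can grow arbitrarily), so the pointwise quotients $\widetilde{g}/\widetilde{T}=\widetilde{h}/\widetilde{S}$, the Cauchy-integral patching, and the ``growth control of the entire function $\widetilde{f}$'' are not meaningful as stated; only $\widetilde{S}$ and $\widetilde{T}$ enjoy Paley--Wiener bounds. Moreover, the Paley--Wiener criterion you invoke at the end (polynomial growth on $\mathbb{R}^n$ together with an exponential factor in $\|\mathrm{Im}\,\zeta\|$) characterizes precisely $\mathscr{E}'(\mathbb{R}^n)$, i.e.\ compactly supported distributions, not $\mathscr{E}(\mathbb{R}^n)$; so even if the patching could be carried out, your construction would land in the wrong space, while the $f$ to be produced is in general not compactly supported.

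The division with bounds must instead be performed on $\widetilde{S}$ and $\widetilde{T}$ themselves and then transferred to $g,h$ by convolution. This is what the paper does: by Ehrenpreis (\cite{Ehrenpreis1970}, Sec.~XI.1), the joint slow-decrease hypothesis \eqref{eq:jointslowdecay} implies that $S$ and $T$ generate the unit ideal of the convolution algebra $\mathscr{E}'(\mathbb{R}^n)$, so there exist $\Phi,\Psi\in\mathscr{E}'(\mathbb{R}^n)$ with $S*\Phi+T*\Psi=\delta_0$. Setting $f=h*\Phi+g*\Psi$ gives a function that is automatically in $\mathscr{E}(\mathbb{R}^n)$, and the compatibility $g*S=h*T$ together with commutativity and associativity of convolution yields $f*T=g$ and $f*S=h$. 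Your ``black box'' option points toward the right theorem, but what that theorem delivers is the Bezout pair $(\Phi,\Psi)$ in $\mathscr{E}'$, not an entire $\widetilde{f}$; without this reformulation your argument does not close.
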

\begin{proof}
    According to Ehrenpreis (\cite{Ehrenpreis1970}, Sec.~ XI.1, the condition \eqref{eq:jointslowdecay} implies that the ideal generated by $S$ and $T$ in the convolution algebra $\mathscr E'(\mathbb R^n)$ is $\mathscr E'(\mathbb R^n)$ itself.  In particular there are $\Phi$ and $\Psi$ in $\mathscr E'$ such that
    $$
    S*\Phi + T*\Psi=\delta_0.
    $$
Putting $f=h*\Phi+g*\Psi$, we obtain \eqref{E:convideal}.  
\end{proof}

(The above result is Lemma~3.17 in \cite{CGKW2024} but we include it here for completeness.)

We also need the following result for Bessel functions.
\begin{lemma}
  Suppose that $r,s>0$ and that $r/s$ is 
  neither $j_\nu$-rational
  nor $j_\nu$-Liouville. Then there is a constant $C>0$ and
  a positive integer $N$ such that
  $$
  |j_\nu(rz)|+|j_\nu(sz)| \geq C(1+|z|)^{-N}
  $$
  for all $z\in\mathbb{C}$.
\end{lemma}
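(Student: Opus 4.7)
The plan is to convert the distance-to-zeros estimate of Lemma~\ref{T:nearestzero} into a value-of-Bessel-function estimate, using the asymptotic expansion \eqref{E:asymptotic-est}. For this I would first establish two auxiliary lower bounds for $j_\nu$: (A) for $|w|$ large and $w$ at distance $\geq 1$ from the zero set $Z$ of $j_\nu$,
$$|j_\nu(w)|\;\geq\; c_1 (1+|w|)^{-1/2-\text{Re}\,\nu};$$
and (B) for $|w|$ large and any $a\in Z$ with $|w-a|\leq 1$,
$$|j_\nu(w)|\;\geq\; c_2 (1+|w|)^{-1/2-\text{Re}\,\nu}\,|w-a|.$$
Estimate (A) follows from the leading term of \eqref{E:asymptotic-est}, namely $(2/(\pi w))^{1/2+\nu}\cos(w-(\nu+1/2)\pi/2)$: by \eqref{E:largezeros} the zeros of $j_\nu$ lie within $O(1/|w|)$ of the zeros of this cosine, so if $w$ is at distance $\geq 1$ from $Z$ and $|w|$ is sufficiently large, then $w$ is bounded away from the zeros of $\cos(w-(\nu+1/2)\pi/2)$; the identity $|\cos(x+iy)|^2=\cos^2 x+\sinh^2 y$ gives a constant lower bound on the cosine factor, and the subleading terms of order $|w|^{-3/2-\text{Re}\,\nu}$ are absorbed. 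Estimate (B) follows from the Taylor expansion $j_\nu(w)=j_\nu'(a)(w-a)+O((w-a)^2)$ near a simple zero, with the remainder bounded uniformly via Cauchy's formula on the unit disk about $a$ together with the Paley--Wiener bound \eqref{E:PW-est1}; the connection formula \eqref{E:Bessel-connection} gives $j_\nu'(a)=-a\,j_{\nu+1}(a)$, and evaluating \eqref{E:asymptotic-est} at $a$---where $\cos(a-(\nu+1/2)\pi/2)=O(1/|a|)$ forces $|\sin(a-(\nu+1/2)\pi/2)|$ close to $1$---yields $|j_{\nu+1}(a)|\sim c\,|a|^{-3/2-\text{Re}\,\nu}$, hence $|j_\nu'(a)|\sim c\,|a|^{-1/2-\text{Re}\,\nu}$.

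With (A) and (B) in hand I would conclude as follows. For $z$ in any compact disk $|z|\leq R_0$, the continuous function $|j_\nu(rz)|+|j_\nu(sz)|$ is strictly positive, because $r/s$ is not $j_\nu$-rational so $j_\nu(rz)$ and $j_\nu(sz)$ have no common zero; by compactness it admits a positive minimum on the disk, which handles small $|z|$. For $|z|>R_0$, let $a,b\in Z$ be the zeros of $j_\nu$ nearest $rz$ and $sz$ respectively. If $|rz-a|\geq 1$ then (A) applied to $w=rz$ gives the desired bound for $|j_\nu(rz)|$ alone; similarly if $|sz-b|\geq 1$. Otherwise both distances are $<1$, so (B) applies to each term and, combined with Lemma~\ref{T:nearestzero},
$$|j_\nu(rz)|+|j_\nu(sz)|\;\geq\; c\,(1+|z|)^{-1/2-\text{Re}\,\nu}\bigl(|rz-a|+|sz-b|\bigr)\;\geq\; c'(1+|z|)^{-N-1/2-\text{Re}\,\nu}.$$

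The main technical obstacle is the uniform control of the subleading terms in \eqref{E:asymptotic-est} needed for (A) and (B): one has to verify the estimates simultaneously in the regime of bounded imaginary part (where the zeros of $j_\nu$ cluster along a horizontal strip and the cosine factor oscillates) and in the regime of large imaginary part (where the $\sinh$ term forces exponential growth and the bound is trivial). The restriction $|\arg w|$ bounded away from $\pi$ in \eqref{E:asymptotic-est} is handled by the evenness of $j_\nu$, which lets us replace $w$ by $-w$ in the left half-plane.
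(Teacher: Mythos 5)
Your overall architecture is sound and parallels the paper's: a compactness argument for bounded $|z|$, a lower bound of size $|w|^{-1/2-\mathrm{Re}\,\nu}$ away from the zero set $Z$, a lower bound proportional to $|w-a|$ near a zero $a$, and finally Lemma~\ref{T:nearestzero}. Step (A), the derivative asymptotics $|j_\nu'(a)|\asymp |a|^{-1/2-\mathrm{Re}\,\nu}$ via \eqref{E:Bessel-connection} and \eqref{E:asymptotic-est}, and the concluding case analysis are all fine. The gap is in your justification of (B). Bounding the Taylor remainder by Cauchy's formula on the unit disk about $a$ together with the Paley--Wiener bound \eqref{E:PW-est1} only gives $|j_\nu(w)-j_\nu'(a)(w-a)|\leq C|a|^{-\mathrm{Re}\,\nu}\,|w-a|^2$ (the imaginary parts on the disk are bounded, so the exponential factor is $O(1)$), because \eqref{E:PW-est1} is not sharp in the oscillatory region: it misses the factor $|w|^{-1/2}$. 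Since $|j_\nu'(a)|\asymp|a|^{-1/2-\mathrm{Re}\,\nu}$, the linear term dominates this remainder only when $|w-a|\lesssim |a|^{-1/2}$, not on the whole range $|w-a|\leq 1$ that your case split requires. Points $w=rz$ or $sz$ with $c\,|a|^{-1/2}\leq |w-a|\leq 1$ are covered neither by (A) (the distance to $Z$ is less than $1$) nor by your (B), so the argument as written has a hole precisely in the regime that matters for the small-denominator estimate.

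Both the statement (B) and your strategy are salvageable. One repair: bound the remainder with the correct envelope, e.g.\ via $j_\nu''(z)=-j_{\nu+1}(z)+z^2 j_{\nu+2}(z)$ (from \eqref{E:Bessel-connection}) and the Hankel expansion \eqref{E:asymptotic-est}, which gives $|j_\nu''|\leq C|a|^{-1/2-\mathrm{Re}\,\nu}$ near $a$; this yields (B) for $|w-a|\leq\delta$ with a fixed $\delta>0$, and the remaining annulus $\delta\leq|w-a|\leq 1$ is then handled by a distance-$\delta$ version of (A), legitimate because by \eqref{E:largezeros} all other zeros lie at distance roughly $\pi-1$ or more from such $w$. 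The paper instead avoids Taylor remainders altogether: around each large zero $a_m$ it takes a box of fixed size containing no other zero, applies the minimum modulus principle to the entire nonvanishing function $j_\nu(z)/(z-a_m)$, and transfers the outside-the-box bound $C|w|^{-1/2-\mathrm{Re}\,\nu}$ from the boundary into the box, obtaining $|j_\nu(z)|\geq C|z|^{-1/2-\mathrm{Re}\,\nu}|z-a_m|$ there directly; you may find that route cleaner than patching the Taylor estimate.
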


\begin{proof}
  Notice that $rz$ and $sz$ cannot be simultaneous
  zeros of $j_\nu$, so the sum
  $|j_\nu(rz)|+|j_\nu(sz)|$ is bounded below by
  a constant on any set $|z|\leq R$ with $R>0$.
  Therefore
  it is sufficient to consider large $|z|$.
  Moreover, $j_\nu$ is even, so it also suffices to 
  consider $\mathrm{Re}(z)\geq 0$.

  Let $b_m=m\pi+(\nu-1/2)\pi/2$ be the zeros of
  $\cos(z-(\nu+1/2)\pi/2)$ and let $m$ be large enough
  that the open box 
  $B(b_m,\pi/4) = 
  \{z: |\mathrm{Re}(z-b_m)|< \pi/4,|\mathrm{Im}(z-b_m)|< \pi/4 \}$ contains one and only one zero, denoted 
  $a_m$, of $j_\nu$. Outside all boxes $B(b_m,\pi/4)$, the function $|\cos(z-(\nu+1/2)\pi/2)|$ is bounded
  uniformly below and the function
  $|\tan(z-(\nu+1/2)\pi/2)|$ is bounded uniformly above. Therefore the asymptotic expansion \eqref{E:asymptotic-est} gives
  \begin{align*}
  |j_\nu(z)| &=
  \left|
  \left(\frac{2}{\pi z}\right)^{\frac{1}{2}+\nu}\left[
  1-\frac{1}{z}\tan\left(z-\left(\nu+\frac{1}{2}\right)\frac{\pi}{2}\right)+O\left(\frac{1}{z^2}\right)
  \right]\right|\\&\qquad
  \times \left|
  \cos\left(z-\left(\nu+\frac{1}{2}\right)\frac{\pi}{2}\right)\right| \\
  &\geq C|z|^{-1/2-\mathrm{Re}(\nu)}
  \end{align*}
  for an appropriately chosen constant $C>0$ and
  for $|z|$ sufficiently large.
  Therefore the desired estimate holds true if at least one of $rz$ or $sz$ is outside such a box. 

  We now prove the estimate for the situation where
  both $rz$ and $sz$ are in (possibly different) boxes.
  Let us estimate $|j_\nu(z)|$ when $z$ is in the box
  $B(b_m,\pi/4)$. Remember that $a_m$ is the only root of $j_\nu$ in
  this box and that $a_m$ is simple. Therefore
  $\frac{j_\nu(z)}{z-a_m}$ is entire and it does not vanish on
  the box or its boundary. Therefore the minimum
  of $\left|\frac{j_\nu(z)}{z-a_m}\right|$ is
  attained at some point $w$ on the boundary. 
  Since
  $|w-a_m|\leq \pi/\sqrt{2}$ we get
  $\left|\frac{j_\nu(w)}{w-a_m}\right|
  \geq \frac{\sqrt{2}}{\pi} |j_\nu(w)|$.
  From the previous estimate of $j_\nu$ 
  outside the box we get
  % $$
  % |j_\nu(w)|\geq C|w|^{-1/2-\mathrm{Re}(\nu)}.
  % $$
  % Therefore 
  $$
  |j_\nu(z)|\geq C|w|^{-1/2-\mathrm{Re}(\nu)}|z-a_m|.
  $$
  Lastly, since $|z|$ is large and $w$ is in a box of fixed size containing $z$
  we have that $|z|\sim |w|$ which implies
  \begin{equation}\label{E:estimateinbox}
  |j_\nu(z)|\geq C|z|^{-1/2-\mathrm{Re}(\nu)}|z-a_m|.    
  \end{equation}

  Assuming $rz$ is in $B(b_m,\pi/4)$
  and $sz$ is in $B(b_n,\pi/4)$, then
  from \eqref{E:estimateinbox} gives
    \begin{align*}
  |j_\nu(rz)|&+|j_\nu(sz)| \\
  &\geq C|sz|^{-1/2-\mathrm{Re}(\nu)}|sz-a_n| + C|rz|^{-1/2-\mathrm{Re}(\nu)}|rz-a_m|\\
  &\geq C\max(r,s)^{-1/2-\mathrm{Re}(\nu)}
  |z|^{-1/2-\mathrm{Re}(\nu)}(|sz-a_n|+|rz-a_m|)    
  \end{align*}
  and Lemma~\ref{T:nearestzero} gives the desired estimate.

\end{proof}

To finish the proof of Theorem~\ref{thm:besselnonLiouville} it remains  to show that the Fourier 
transforms of the
distributions $m_\alpha^r$ and $m_\alpha^s$
satisfy the estimate \eqref{eq:jointslowdecay} when $r/s$ is neither 
$j_\nu$-rational nor $j_\nu$-Liouville.
Choose a $z$ such that $z^2=\zeta\cdot\zeta$. Then $|z|\leq \|\zeta\|$ and we get that
\begin{align*}
|(m_\alpha^r)^\sim(\zeta)|+|(m_\alpha^s)^\sim(\zeta)| 
&= \Gamma(\alpha+n/2)(|j_\nu(r z )|+|j_\nu(s z )|) \\
&\geq C (1+|z|)^{-N} \\
&\geq C(1+\|\zeta\|)^{-N}.
\end{align*}
This proves  Theorem~\ref{thm:besselnonLiouville}.

We now turn to Theorem~\ref{thm:besselLiouville}.
For a compactly supported distribution $\mu$ 
we let $C_\mu$ denote the operator
on $\mathscr{E}(\mathbb{R}^n)$ corresponding to 
convolution by $\mu$.
\begin{lemma}\label{t:eigenspace}
    The space $\mathscr{E}_\lambda=\{ f\in\mathscr{E}(\mathbb{R}^n) : \Delta f = -\lambda^2 f \}$ is an eigenspace for $C_{m_\alpha^t}$. In particular
    $$
    f*m_\alpha^t = \Gamma(\alpha+n/2)j_\nu(\lambda t) f.
    $$
\end{lemma}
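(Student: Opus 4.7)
The plan is to identify $u(x,t) := (f*m_\alpha^t)(x)$ as the solution of the generalized EPD Cauchy problem with data $f$ and invoke the uniqueness Theorem~\ref{T:EPDsoln}, after exhibiting an explicit separated solution of the same Cauchy problem.

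First I would define $v(x,t) := \Gamma(\alpha+n/2)\,j_\nu(\lambda t)\,f(x)$ and check directly that $v$ solves the generalized EPD equation \eqref{E:EPDgen}. Since $\Delta_x f = -\lambda^2 f$, we have $\Delta_x v = -\lambda^2 v$, so it suffices to verify that $\phi(t) := j_\nu(\lambda t)$ satisfies
\begin{equation*}
\phi''(t) + \frac{n-1+2\alpha}{t}\,\phi'(t) = -\lambda^2 \phi(t).
\end{equation*}
This reduces, after the substitution $z=\lambda t$ and the identity $2\nu+1 = n-1+2\alpha$ (since $\nu = \alpha + (n-2)/2$), to the normalized Bessel ODE
\begin{equation*}
j_\nu''(z) + \frac{2\nu+1}{z}\,j_\nu'(z) + j_\nu(z) = 0,
\end{equation*}
which in turn follows from the standard Bessel equation for $J_\nu$ via the relation $J_\nu(z) = (z/2)^\nu j_\nu(z)$.

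Next I would verify the initial conditions. From the series expansion \eqref{E:besselseries}, $j_\nu(0) = 1/\Gamma(\nu+1) = 1/\Gamma(\alpha+n/2)$, so $v(x,0) = f(x)$. Since $j_\nu$ is even and entire, $j_\nu'(0)=0$, hence $\partial_t v(x,0) = 0$. Thus $v$ is a $C^\infty$ function, even in $t$, solving the Cauchy problem \eqref{E:EPDgen}--\eqref{E:EPDCauchy} with the same Cauchy data $(f, 0)$ as $u$. By the uniqueness assertion in Theorem~\ref{T:EPDsoln} (valid since $\alpha \notin -n/2 - \mathbb{Z}^+$), we conclude $u = v$, which is exactly the claimed identity $f * m_\alpha^t = \Gamma(\alpha+n/2)\,j_\nu(\lambda t)\,f$.

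There is essentially no real obstacle here; the only routine care needed is the bookkeeping of the Bessel ODE derivation and confirming that $f \in \mathscr{E}_\lambda$ combined with $\Delta_x(f * m_\alpha^t) = (\Delta f) * m_\alpha^t$ lets the eigenvalue $-\lambda^2$ commute through the convolution. One could alternatively compute $f*m_\alpha^t$ directly using \eqref{E:malphaFT} when $f$ is a plane wave $e^{i\zeta\cdot x}$ with $\zeta\cdot\zeta = \lambda^2$, but the uniqueness argument above handles the general $\mathscr{E}_\lambda$ case uniformly.
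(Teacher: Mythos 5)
Your proof is correct and follows essentially the same route as the paper: both sides are shown to solve the generalized EPD Cauchy problem \eqref{E:EPDgen}--\eqref{E:EPDCauchy} with data $(f,0)$ (the left side via Lemma~\ref{T:EPD1} and \eqref{E:init-data}), and uniqueness from Theorem~\ref{T:EPDsoln} gives the identity. The only cosmetic difference is that you verify the ODE for $j_\nu(\lambda t)$ directly from the classical Bessel equation and the relation $2\nu+1=n-1+2\alpha$, whereas the paper obtains it by Fourier-transforming \eqref{E:EPD-dist} and using \eqref{E:malphaFT}.
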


\begin{proof}
    Both sides of the equation above are solutions
    to the EPD equation with initial conditions
    $u(x,0)=f(x)$ and 
    $\frac{\partial u}{\partial t} (x,0)=0$.
    For the left hand side this follows from
    Lemma~\ref{T:EPD1} and \eqref{E:init-data}. Taking Fourier transform in the space variable of \eqref{E:EPD-dist} and
    coupling it with \eqref{E:malphaFT} gives
    the relation
    $$
    -(\zeta\cdot\zeta)j_\nu(t(\zeta\cdot\zeta)^{1/2})
    = (\zeta\cdot\zeta) j_\nu''(t(\zeta\cdot\zeta)^{1/2})
    +
    (\zeta\cdot\zeta)^{1/2}\frac{n-1+\alpha}{t} j_\nu'(t(\zeta\cdot\zeta)^{1/2}).
    $$
    Letting $\zeta=\lambda e_1$ this gives
    $$
    -\lambda^2 j_\nu(t\lambda)
    = \lambda^2 j_\nu''(t\lambda)
    +
    \lambda \frac{n-1+\alpha}{t} j_\nu'(t\lambda),
    $$
    which is used to prove that the right hand side 
    is also a solution.
\end{proof}

The following result, while not needed in the sequel, characterizes the space of mean periodic functions with respect to $m_\alpha^t$.  Its proof is analogous to the proof of Theorem~2.5 from
\cite{CGKW2024}, and we shall omit it. 
%\marginpar{\color{red} We do not really use this result. Should we remove it?}
\begin{theorem}
    The kernel $\ker(m_\alpha^t)$ is the closure 
    in $\mathscr{E}(\mathbb{R}^n)$ of
    $\oplus_{j}\mathscr{E}_{a_j/t}$ where
    $a_j$ denotes the zeros of $j_\nu$.
\end{theorem}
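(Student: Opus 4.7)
The inclusion $\overline{\bigoplus_{j} \mathscr{E}_{a_j/t}} \subseteq \ker(m_\alpha^t)$ is immediate: by Lemma~\ref{t:eigenspace}, every $f \in \mathscr{E}_{a_j/t}$ satisfies $f \ast m_\alpha^t = \Gamma(\alpha+n/2)\,j_\nu(a_j)\,f = 0$, so $\mathscr{E}_{a_j/t} \subseteq \ker(m_\alpha^t)$ for each $j$; since the kernel is closed in $\mathscr{E}(\mathbb{R}^n)$, so is the closure of the algebraic direct sum. The substantive claim is the reverse inclusion, which is a spectral synthesis statement for convolution with the radial distribution $m_\alpha^t$.

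My plan is to argue by Hahn--Banach duality. Let $V := \overline{\bigoplus_j \mathscr{E}_{a_j/t}}$. It suffices to show that every $T \in \mathscr{E}'(\mathbb{R}^n)$ with $T|_V = 0$ also vanishes on $\ker(m_\alpha^t)$. Since each plane wave $x \mapsto e^{i\zeta\cdot x}$ with $\zeta\cdot\zeta = (a_j/t)^2$ lies in $\mathscr{E}_{a_j/t}$, and by the analytic functional representation of eigenfunctions of $\Delta$ (Helgason's Fourier transform on $\mathbb{R}^n$), these plane waves are dense in $\mathscr{E}_{a_j/t}$, the condition $T|_V = 0$ translates on the Fourier--Laplace side into the vanishing of $\widetilde T \in \mathbb{E}'_n$ on each complex variety $V_j := \{\zeta \in \mathbb{C}^n : \zeta\cdot\zeta = (a_j/t)^2\}$.

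By \eqref{E:malphaFT}, the zero set of $\widetilde{m_\alpha^t}(\zeta) = \Gamma(\alpha+n/2)\,j_\nu\bigl(t(\zeta\cdot\zeta)^{1/2}\bigr)$ is precisely $\bigcup_j V_j$, and because each zero $a_j$ of $j_\nu$ is simple, each $V_j$ appears with multiplicity one. The crux is then a division lemma: if $\widetilde T \in \mathbb{E}'_n$ vanishes on every $V_j$, then the quotient $\widetilde T/\widetilde{m_\alpha^t}$ again lies in $\mathbb{E}'_n$. Once this is in hand, one obtains $T = m_\alpha^t \ast S$ for some $S \in \mathscr{E}'(\mathbb{R}^n)$, and then, using that $m_\alpha^t$ is even, $T(f) = S(m_\alpha^t \ast f) = 0$ for every $f \in \ker(m_\alpha^t)$, completing the Hahn--Banach reduction. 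The entirety of the quotient on $\mathbb{C}^n$ follows from Weierstrass preparation along the smooth points of $V_j$, while off a small neighborhood of $\bigcup_j V_j$ the quotient is trivially holomorphic.

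The main obstacle is controlling the growth of $\widetilde T/\widetilde{m_\alpha^t}$ so it meets the Paley--Wiener bounds defining $\mathbb{E}'_n$: exponential type $|t|+R$ (where $R$ comes from the support of $T$) along imaginary directions, and polynomial growth on $\mathbb{R}^n$. The exponential-type estimate should follow from combining the Paley--Wiener bound $|\widetilde{m_\alpha^t}(\zeta)| \leq C\,e^{|t|\|\mathrm{Im}\,\zeta\|}$ with a minimum modulus argument on tubes about $V_j$, analogous to the lemma proved in the argument for Theorem~\ref{thm:besselnonLiouville}. The polynomial growth along $\mathbb{R}^n$ rests on Bessel lower bounds of the same flavor as those used there, now with only one set of zeros rather than two. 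A secondary technical point is rigorously justifying the density of plane waves in each $\mathscr{E}_{a_j/t}$, which the analogous Theorem~2.5 of \cite{CGKW2024} handles and which adapts to our setting with the Bessel-function zero $a_j/t$ in place of the radius of a sphere.
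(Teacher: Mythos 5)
The paper does not actually print a proof of this theorem: it is stated with the remark that the argument is analogous to Theorem~2.5 of \cite{CGKW2024}, so your proposal can only be compared with that intended route, and in outline it is the same one --- the easy inclusion via Lemma~\ref{t:eigenspace}, then Hahn--Banach duality, passage to the Fourier--Laplace side via \eqref{E:malphaFT}, division of $\widetilde T$ by $j_\nu\bigl(t(\zeta\cdot\zeta)^{1/2}\bigr)$ (legitimate at the level of entire functions because the zeros $a_j$ are simple and the quadrics $\zeta\cdot\zeta=(a_j/t)^2$ are smooth), and the final identity $T(f)=S(m_\alpha^t*f)=0$ using the evenness of $m_\alpha^t$. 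Two side remarks: the density of plane waves in $\mathscr E_{a_j/t}$, which you flag as a technical burden, is not needed for the direction you use --- the mere containment of the plane waves in $\mathscr E_{a_j/t}\subseteq V$ already forces $\widetilde T$ to vanish on every $V_j$; and the theorem includes the finitely many non-real zeros occurring when $\nu$ is complex or $\nu<-1$, which your Fourier--Laplace formulation handles without change.

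The genuine gap is that the crux --- the division lemma asserting $\widetilde T/\widetilde{m_\alpha^t}\in\mathbb E'_n$ --- is only claimed to ``follow from'' a minimum-modulus argument; nothing quantitative is proved, and this estimate is where essentially all of the work lies. It is not a formality: spectral synthesis for convolution equations can fail in several variables, so one really must use the specific lower bounds available for $j_\nu$. In particular the quotient cannot be bounded on $\mathbb R^n$ by dividing pointwise, since $j_\nu(t\|\xi\|)$ vanishes at the real zeros; one needs a bound of the shape $|j_\nu(z)|\geq C|z|^{-1/2-\mathrm{Re}\,\nu}\,|z-a_m|$ near each zero --- this is exactly estimate \eqref{E:estimateinbox} in the Bessel lower-bound lemma used for Theorem~\ref{thm:besselnonLiouville} --- and must then transfer it to $\mathbb C^n$, for instance by the maximum principle on complex lines or on the tubes you mention, while keeping the factor $e^{|t|\,\|\mathrm{Im}\,\zeta\|}$ under control so that the Paley--Wiener--Schwartz bound survives. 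Alternatively, and more cleanly, the division step is of the type covered by Ehrenpreis's theory of slowly decreasing functions (principal ideals generated by them in $\mathbb E'_n$ are closed and localizable), and the slow decrease of $j_\nu\bigl(t(\zeta\cdot\zeta)^{1/2}\bigr)$ is already established in the proof of Theorem~\ref{T:s-nu-surjectivity}; invoking that result (the paper already appeals to \cite{Ehrenpreis1970} for the two-generator analogue \eqref{eq:jointslowdecay}) would close the argument. Until one of these is carried out, what you have is a correct plan rather than a proof.
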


As in Subsection~\ref{sec:Besselprelims}, for any $\alpha\in\mathbb C$, we put $\nu=\alpha+(n-2)/2$.

\begin{lemma}
$C_{m_\alpha^s}$ is injective on $\ker(C_{m_\alpha^r})$ if and only if $r/s$ is
not $j_\nu$-rational.
\end{lemma}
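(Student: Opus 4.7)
The plan is to reduce both directions to results already proved in the paper, using the eigenspace description of Lemma~\ref{t:eigenspace} for one direction and the uniqueness theorem (Theorem~\ref{T:2snapshotuniqueness}) for the other.

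For the ``if'' direction, I would argue by contrapositive. Suppose $C_{m_\alpha^s}$ is not injective on $\ker(C_{m_\alpha^r})$; pick $f\in\ker(C_{m_\alpha^r})$ with $f\ne 0$ and $C_{m_\alpha^s}f=0$. Then $f$ satisfies both $f*m_\alpha^r=0$ and $f*m_\alpha^s=0$, so Theorem~\ref{T:2snapshotuniqueness} forces $r/s$ to be $j_\nu$-rational (since that theorem asserts uniqueness precisely when $r/s$ is not $j_\nu$-rational).

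For the ``only if'' direction, suppose $r/s$ is $j_\nu$-rational, so $r/s=a/b$ for some zeros $a,b$ of $j_\nu$ with $b\ne 0$. By the definition of $j_\nu$-rational the quotient $a/b$ is nonzero, so $a\ne 0$ as well; set $\lambda=a/r=b/s\ne 0$. Choose $\zeta\in\mathbb C^n$ with $\zeta\cdot\zeta=\lambda^2$ (for instance $\zeta=\lambda e_1$) and let $f(x)=e^{i\zeta\cdot x}$. Then $f\in\mathscr E(\mathbb R^n)$, $\Delta f=-\lambda^2 f$, and $f\ne 0$. By Lemma~\ref{t:eigenspace},
\begin{equation*}
f*m_\alpha^r=\Gamma(\alpha+n/2)\,j_\nu(\lambda r)\,f=\Gamma(\alpha+n/2)\,j_\nu(a)\,f=0,
\end{equation*}
so $f\in\ker(C_{m_\alpha^r})$, and similarly $f*m_\alpha^s=\Gamma(\alpha+n/2)\,j_\nu(b)\,f=0$, so $C_{m_\alpha^s}$ kills this nonzero element of $\ker(C_{m_\alpha^r})$.

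There is no real obstacle here; the only small care required is in the ``only if'' direction, where one must observe that $\lambda\ne 0$ (guaranteed by $a\ne 0$, which follows from $z=a/b$ being nonzero in the definition of $j_\nu$-rational, together with our standing assumption that $\nu$ is not a negative integer so that $j_\nu(0)=1/\Gamma(\nu+1)\ne 0$, i.e.\ $0\notin Z$). Everything else is a direct application of the two invoked lemmas.
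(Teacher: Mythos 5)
Your proof is correct and follows essentially the same route as the paper: the non-injectivity when $r/s$ is $j_\nu$-rational comes from Lemma~\ref{t:eigenspace} (the paper cites the whole eigenspace $\mathscr{E}_{a/r}$, you exhibit the explicit exponential in it, which is the same idea and also appears in the paper just before Theorem~\ref{T:2snapshotuniqueness}), and the injectivity when $r/s$ is not $j_\nu$-rational is exactly the paper's appeal to Theorem~\ref{T:2snapshotuniqueness}. Your extra remark that $\lambda\neq 0$ is a harmless refinement already implicit in the definition of $j_\nu$-rational.
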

\begin{proof}
    If $r/s=a/b$ %is $j_\nu$-rational 
    where $a,b$ are zeros of $j_\nu$, then due to Lemma~\ref{t:eigenspace} the space
    $\mathscr{E}_{a/r}$ is in both 
    $\ker(C_{m_\alpha^r})$ and 
    $\ker(C_{m_\alpha^s})$.   It follows that $C_{m_\alpha^s}$ is not injective on
    $\ker(C_{m_\alpha^r})$.

    If $r/s$ is not $j_\nu$-rational, then for 
    $f$ in both 
    $\ker(C_{m_\alpha^r})$ and 
    $\ker(C_{m_\alpha^s})$ 
    Theorem~\ref{T:2snapshotuniqueness}
    shows that $f=0$.  
\end{proof}

\begin{lemma}
    Let $r/s$ be a $j_\nu$-Liouville number, then the convolution operator
    $C_{m_\alpha^r}:\ker(C_{m_\alpha^s})\to \ker(C_{m_\alpha^s})$ is not surjective.
\end{lemma}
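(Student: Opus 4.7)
The plan is to argue by contradiction and exploit the fact that the $j_\nu$-Liouville property of $r/s$ forces $|j_\nu(br/s)|$ to be rapidly small along a suitable sequence of zeros $b$ of $j_\nu$. Assume for contradiction that $C_{m_\alpha^r}$ maps $\ker(C_{m_\alpha^s})$ onto itself. Since $r/s$ is $j_\nu$-Liouville and in particular not $j_\nu$-rational, the preceding lemma gives that $C_{m_\alpha^r}$ is also injective on $\ker(C_{m_\alpha^s})$, so it is a continuous linear bijection of the Fréchet space $\ker(C_{m_\alpha^s})\subset\mathscr{E}(\mathbb{R}^n)$ onto itself. By the open mapping theorem its inverse is continuous: for the seminorm $q_0(\varphi)=\sup_{|x|\le 1}|\varphi(x)|$ there exist a seminorm of the form $q_m(\varphi)=\sup_{x\in K,\,|\beta|\le m}|\partial^\beta\varphi(x)|$ on $\mathscr{E}(\mathbb{R}^n)$ and a constant $K>0$ such that, for every $g\in\ker(C_{m_\alpha^s})$, the unique $f\in\ker(C_{m_\alpha^s})$ satisfying $f*m_\alpha^r=g$ obeys $q_0(f)\le K\,q_m(g)$.

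Next I would produce the test functions. Using the $j_\nu$-Liouville hypothesis, choose for each $\ell$ zeros $a_\ell,b_\ell$ of $j_\nu$ with $|b_\ell|>1$ and $|r/s-a_\ell/b_\ell|<|b_\ell|^{-\ell}$. A short argument shows $|b_\ell|\to\infty$: otherwise the pairs $(a_\ell,b_\ell)$ would lie in a finite set of zeros, the ratios $a_\ell/b_\ell$ would take only finitely many values, and letting $\ell\to\infty$ would force $r/s$ to be $j_\nu$-rational. By Corollary~\ref{T:nu-complex} we may also assume $\nu\in\mathbb{R}$, and, after discarding finitely many indices, take $a_\ell,b_\ell\in\mathbb{R}$ with $|a_\ell|\sim (r/s)|b_\ell|$. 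Set $g_\ell(x)=e^{i(b_\ell/s)x_1}\in\mathscr{E}_{b_\ell/s}$. Lemma~\ref{t:eigenspace} gives $g_\ell*m_\alpha^s=\Gamma(\alpha+n/2)j_\nu(b_\ell)g_\ell=0$, so $g_\ell\in\ker(C_{m_\alpha^s})$, and the unique preimage there is $f_\ell=g_\ell/[\Gamma(\alpha+n/2)j_\nu(b_\ell r/s)]\in\mathscr{E}_{b_\ell/s}$.

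The main technical obstacle is the upper bound $|j_\nu(b_\ell r/s)|\le C|b_\ell|^{1-\ell-\mathrm{Re}\,\nu}$. This comes from Taylor-expanding $j_\nu$ at the simple zero $a_\ell$: since $|b_\ell r/s-a_\ell|<|b_\ell|^{1-\ell}\to 0$, the expansion is dominated by its linear term $j_\nu'(a_\ell)(b_\ell r/s-a_\ell)$. The connection formula \eqref{E:Bessel-connection} together with \eqref{E:PW-est1} applied to $j_{\nu+1}$ at the real zero $a_\ell$ gives $|j_\nu'(a_\ell)|=|a_\ell||j_{\nu+1}(a_\ell)|\le C|a_\ell|^{-\mathrm{Re}\,\nu}$, and Cauchy's estimate in a fixed disk around $a_\ell$ bounds the higher-order Taylor coefficients polynomially in $|a_\ell|$, so those terms are negligible compared to the linear one. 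Combining with $|a_\ell|\sim(r/s)|b_\ell|$ yields the stated bound.

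Finally the estimates assemble to a contradiction. One has $q_0(f_\ell)=1/|\Gamma(\alpha+n/2)j_\nu(b_\ell r/s)|$ and $q_m(g_\ell)\le C'|b_\ell|^m$, so $q_0(f_\ell)\le K\,q_m(g_\ell)$ forces $|j_\nu(b_\ell r/s)|\ge c|b_\ell|^{-m}$. Coupled with the upper bound above this yields $|b_\ell|^{\ell-m-1-\mathrm{Re}\,\nu}\le M$ for a constant $M$ independent of $\ell$, which is impossible since $|b_\ell|\to\infty$ while the exponent $\ell-m-1-\mathrm{Re}\,\nu$ tends to $+\infty$. Hence $C_{m_\alpha^r}$ cannot be surjective on $\ker(C_{m_\alpha^s})$.
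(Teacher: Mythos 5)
Your argument is correct. Note that the paper does not actually write out a proof of this lemma: it only invokes Lemma~3.12 of \cite{CGKW2024} ``with obvious modifications,'' and what you have produced is precisely the expected self-contained version of that argument — assume surjectivity, combine it with injectivity (available because a $j_\nu$-Liouville number is in particular not $j_\nu$-rational) to get a continuous inverse on the Fr\'echet space $\ker(C_{m_\alpha^s})$ via the open mapping theorem, and then contradict the resulting seminorm estimate using the eigenfunctions $e^{i(b_\ell/s)x_1}$ of Lemma~\ref{t:eigenspace}, whose preimages blow up because the Liouville approximation makes $j_\nu(b_\ell r/s)$ super-polynomially small in $|b_\ell|$. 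Two small points are worth making explicit: $j_\nu(b_\ell r/s)\neq 0$ (so that $f_\ell$ is defined) follows from the non-$j_\nu$-rationality of $r/s$; and in your argument that $|b_\ell|\to\infty$, boundedness of $b_\ell$ also forces boundedness of $a_\ell$ (since $|a_\ell/b_\ell|\leq r/s+1$), which is what confines the pairs to a finite set of zeros. Finally, since $a_\ell$ and $b_\ell r/s$ are real, the upper bound on $|j_\nu(b_\ell r/s)|$ can be obtained a bit more directly from the mean value theorem on the segment joining $a_\ell$ and $b_\ell r/s$ together with \eqref{E:Bessel-connection} and \eqref{E:PW-est1}, dispensing with the Cauchy-estimate control of the higher Taylor terms; your version works as well.
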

\begin{proof}
    The proof follows that of Lemma~3.12 in
    \cite{CGKW2024} with obvious modifications.
\end{proof}

We now return to the proof of Theorem~\ref{thm:besselLiouville}.
When $r/s$ is a $j_\nu$-Liouville number 
we can let $g=0$
and let $h$ be a function in $\ker(C_{m_\alpha^s})$
which is not in $C_{m_\alpha^r}(\ker(C_{m_\alpha^s}))$. Then $g$ and $h$
satisfy the compatibility condition
\eqref{E:compatibility2} yet there is no
$f$ for which $h=f*m_\alpha^r$ and
$g=f*m_\alpha^s$.

In the case when $r/s$ is $j_\nu$-rational we write $r/s=a/b$ where $a$ and $b$ are distinct zeros of
$j_\nu$. Define the distribution
$\widetilde{\phi_\alpha}(\zeta) = (\zeta\cdot\zeta)-(a/r)^2
= (\zeta\cdot\zeta)-(b/s)^2$ and
$\Psi_\alpha^r$ by
$(\Psi_\alpha^r)^{\sim}(\zeta)=(m^r_\alpha)^{\sim}(\zeta)/\widetilde{\phi_\alpha}(\zeta)$. It then follows that
$m_\alpha^s*\Psi_\alpha^r = 
m_\alpha^r*\Psi_\alpha^s$. If $u(x,t)=f*m_\alpha^s(x)$ is a solution to the EPD equation, then
we get that
the snapshots $u_s,u_r$ must satisfy
$u_s*\Psi_\alpha^r = u_r*\Psi_\alpha^s$. This leads to the stronger compatibility condition \begin{equation} \label{E:strongcompatibility}
    g*\Psi_\alpha^r=h*\Psi_\alpha^s
\end{equation}
for snapshots $g,h$. 
Let $g=e^{ia/r x_1}$
and $h=0$ then 
both $g$ and $h$ are in the kernel of $C_{\phi_\alpha}$ and therefore
satisfy the original compatibility condition \eqref{E:compatibility2}.
But 
$g*\Psi_\alpha^r(x)=\Gamma(\alpha+n/2)j_\nu'(a)g(x)r^2/(2a)$
and 
$h*\Psi_\alpha^s(x)=0$ and these
cannot be equal since the root $a$ is simple. Thus $g$ and $h$ do not satisfy
the stronger compatibility condition \eqref{E:strongcompatibility} and there is no $f$ for which
$h=f*m_\alpha^r$ and $g=f*m_\alpha^s$. 
This completes the proof of Theorem~\ref{thm:besselLiouville}.

\begin{remark}
  In a forthcoming paper we will investigate when a more restrictive compatibility condition, replacing $g*m_\alpha^r=h*m_\alpha^s$,
  will ensure the existence of $f$
  in the above theorem when $r/s$ is $j_\nu$-rational. A similar problem was addressed in 
  Theorem~3.19 of \cite{CGKW2024}.
\end{remark}

\section{Properties of the set of $j_\nu$-Liouville numbers}\label{Sec:BLproperties}

In this section, we prove that for any $\nu\in\mathbb R$,  the set of $j_\nu$-Liouville numbers 
is uncountable, dense in $\mathbb{R}$, and of measure zero. 
As mentioned in Corollary \ref{T:nu-complex}, the set of $j_\nu$-Liouville numbers is empty if $\mathrm{Im}(\nu)\neq 0$.  We also note that for real $\nu$, all but finitely many zeros of $j_\nu$ are real, and that all zeros are real when $\nu>-1$ (\cite{WatsonTreatiseBessel}, Sec. 15.27).

\subsection{Cardinality and density of the set of $j_\nu$-Liouville numbers}\label{subsec:Cardinality-Bessel-Liouville}

As we stated in \eqref{E:largezeros},
the large zeros of $j_\nu$ with nonnegative real part are indexed by all sufficiently large positive integers $m$ as follows:
$$
a_m = m\pi+\nu\pi/2-\pi/4+\mathcal{O}(m^{-1}), \qquad m\to\infty.
$$
It follows easily from this that there exist positive constants $C_1, \, C_2$ such that
$$
\begin{aligned}
{} &  C_1^{-1} n \leq a_n \leq C_1 n,  \quad (n=1,2,3, \cdots), \\
{} & C_2^{-1} \leq a_{\ell+1} - a_{\ell} \leq C_2,  \quad (\ell=1,2,3, \cdots).
\end{aligned}
$$
Thus we have
\begin{equation}\label{estimate-defference-two-ratios}
\frac{1}{ C_1 C_2 n }  \leq 
   \frac{ a_{\ell+1}  }{ a_n  } - \frac{ a_{\ell }  }{ a_n  }
     \leq \frac{C_1 C_2}{n}.
\qquad \text{for} \; \, \ell, n=1,2,3, \cdots.
\end{equation}
Using the above inequality, we construct a candidate of a $j_\nu$-Liouville number
as a limit of a sequence of ratios of two zeros of the Bessel functions.
For this purpose, we introduce a special ratio $\theta (n, x)$ of two zeros of the Bessel function
as follows.
For $x>0$ and for a positive integer $n$, let us choose $\ell$ so that
$$
\frac{ a_{\ell-1}  }{ a_n  }  < x \leq  \frac{ a_{\ell }  }{ a_n  },
$$
and we define $\theta (n, x)$ by
$$
\theta (n, x) = \frac{ a_{\ell+1}  }{ a_n  } 
$$
Then we see easily that 
there exists a positive constant $C>1$ independent of $n$ and $x$ such that
\begin{equation}\label{theta-x-inequality}
\frac{1}{ C n}  \leq \theta (n, x) -x \leq \frac{C}{n}.
\end{equation}
In fact, the above inequality follows from 
inequality \eqref{estimate-defference-two-ratios} and the following inequality
$$
\frac{ a_{\ell+1}  }{ a_n  } - \frac{ a_{\ell }  }{ a_n  }
\leq \theta (n, x) -x \leq 
\frac{ a_{\ell+1}  }{ a_n  } - \frac{ a_{\ell-1}  }{ a_n  }.
$$
Now let us choose a positive integer $N$ so that
\begin{equation}\label{choice-of-N}
      1+ \frac{1}{ N^{2}  } +\frac{1}{ N^{ 2 \cdot 3 \cdot 4  } }
               + \frac{1}{ N^{ 2 \cdot 3 \cdot 4 \cdot 5 \cdot 6 }  } + \cdots  
               < \min \{ C, \frac{N}{C^2}  \}.
\end{equation}
Next, let $\mathcal{B} := \{0, 1 \}^{\mathbb{N}}$, the set of all sequences 
$\varepsilon = (\varepsilon_1, \varepsilon_2, \cdots )$ with $\varepsilon_j =0$ or $1$.
We note that $\mathcal{B}$ is an uncountable set.
For $\varepsilon  =(\varepsilon_1, \varepsilon_2, \cdots ) \in \mathcal{B}$, 
let 
$$
n_m ( \varepsilon ) = \varepsilon_m N^{(2m-1)!} + (1- \varepsilon_m) N^{(2m)!}.
$$
Let $0<x<y$. We now aim to define a sequence of 
$j_\nu$-rational numbers which is in $(x,y)$
and converges rapidly.
Define the sequence $\{ \Theta_m (\varepsilon) \}_{m=1}^{\infty}$ by
\begin{equation}\label{Theta-sequence}
\begin{cases}
\Theta_1 (\varepsilon)  & = \theta ( n_1 ( \varepsilon ), x ),  \\
\Theta_m (\varepsilon) & 
    = \theta ( n_m ( \varepsilon ), \Theta_{m-1} (\varepsilon) ),
        \quad (m \geq 2).
\end{cases}
\end{equation}
By inequality \eqref{theta-x-inequality}, we have
\begin{equation}\label{Theta-m-difference}
0 < \Theta_{m+1} (\varepsilon) -\Theta_m (\varepsilon)
\leq \frac{C}{ n_{m+1}  (\varepsilon) } \leq \frac{C}{  N^{ (2m+1)! }  },
\qquad (m=1,2,3, \cdots),
\end{equation}
from which we see that $\{ \Theta_m (\varepsilon) \}_{m=1}^{\infty}$ is a Cauchy sequence,
and thus there is a limit
$$
\Theta (\varepsilon) := \lim_{m \to \infty} \, \Theta_m (\varepsilon).
$$
Our next objective is to prove that the mapping 
$\mathcal{B} \ni \varepsilon \mapsto \Theta (\varepsilon) \in \mathbb{R}$ is injective.
For this purpose, we introduce the lexicographical order $\prec$ on $\mathcal{B}$ as follows.
For $\varepsilon  =(\varepsilon_1, \varepsilon_2, \cdots ), \;
\varepsilon'  =(\varepsilon'_1, \varepsilon'_2, \cdots, ) \in \mathcal{B}$, 
we say $\varepsilon \prec \varepsilon'$ if there exists a positive integer $m$ such that
$$
\varepsilon_j = \varepsilon'_j, \quad (j <m), \quad 
\text{and} \; \; \varepsilon_m < \varepsilon'_m.
$$
Then we have 
\begin{proposition}\label{injectivity-of-Theta-map}
If $\varepsilon \prec \varepsilon'$, $\Theta (\varepsilon)  < \Theta (\varepsilon') $.
\end{proposition}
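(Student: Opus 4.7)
The plan is to isolate the first index $m$ where $\varepsilon$ and $\varepsilon'$ differ, quantify the gap $\Theta_m(\varepsilon')-\Theta_m(\varepsilon)$ that opens at that step, and then argue that all subsequent jumps are too small to close it.

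First, since $\varepsilon \prec \varepsilon'$, I would let $m$ be the minimal index with $\varepsilon_j=\varepsilon'_j$ for $j<m$, $\varepsilon_m=0$, and $\varepsilon'_m=1$. The definition of $n_m$ then forces $n_m(\varepsilon)=N^{(2m)!}$ (the large value) and $n_m(\varepsilon')=N^{(2m-1)!}$ (the small value), while $\Theta_{m-1}(\varepsilon)=\Theta_{m-1}(\varepsilon')$. Applying the two-sided estimate \eqref{theta-x-inequality} at this common starting point gives
\[
\Theta_m(\varepsilon')-\Theta_m(\varepsilon) \;\geq\; \frac{1}{C\,N^{(2m-1)!}}-\frac{C}{N^{(2m)!}},
\]
and the first term dominates, since $(2m)!=(2m-1)!\cdot (2m)$ makes the second term vastly smaller.

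Next, I would control the tails $\Theta(\varepsilon)-\Theta_m(\varepsilon)$ and $\Theta(\varepsilon')-\Theta_m(\varepsilon')$. Both are nonnegative, because the sequences $\{\Theta_k(\cdot)\}$ are increasing in $k$, and by \eqref{Theta-m-difference} each is bounded above by $\sum_{k\geq m}C/N^{(2k+1)!}$. Since $\Theta(\varepsilon')\geq\Theta_m(\varepsilon')$, it suffices to show that the step-$m$ gap exceeds the $\varepsilon$-tail:
\[
\frac{1}{C\,N^{(2m-1)!}}-\frac{C}{N^{(2m)!}} \;>\; \sum_{k\geq m}\frac{C}{N^{(2k+1)!}}.
\]
Once this is established, one concludes $\Theta(\varepsilon')-\Theta(\varepsilon)\geq [\Theta_m(\varepsilon')-\Theta_m(\varepsilon)]-[\Theta(\varepsilon)-\Theta_m(\varepsilon)]>0$, as desired.

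The main obstacle, and the reason for introducing \eqref{choice-of-N}, lies in verifying this final inequality uniformly in $m\geq 1$. After multiplying through by $C\,N^{(2m-1)!}$ and using that $(2m)!-(2m-1)!=(2m-1)!(2m-1)$, as well as the even more rapid growth of $(2k+1)!-(2m-1)!$ for $k>m$, the right-hand sum can be dominated by a constant multiple of $1+N^{-2}+N^{-24}+N^{-720}+\cdots=\sum_{k\geq 0}N^{-(2k)!}$, which by \eqref{choice-of-N} is strictly less than $N/C^2$. Thus $N$ was chosen precisely so that the step-$m$ gap always dominates the combined tail corrections, and the proposition follows.
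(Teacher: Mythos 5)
Your argument is correct, and it runs on exactly the same three ingredients as the paper's proof: the two-sided increment bound \eqref{theta-x-inequality}, the tail bound \eqref{Theta-m-difference}, and the $N/C^2$ branch of the choice \eqref{choice-of-N} after factoring the leading term out of the tail series. The bookkeeping, however, is organized differently. The paper first replaces $\varepsilon$ and $\varepsilon'$ by the extremal sequences $\overline{\varepsilon}=(\varepsilon_1,\dots,\varepsilon_{m-1},0,1,1,\dots)$ and $\underline{\varepsilon'}=(\varepsilon_1,\dots,\varepsilon_{m-1},1,0,0,\dots)$, invoking the weak monotonicity $\varepsilon\preceq\varepsilon'\Rightarrow\Theta(\varepsilon)\leq\Theta(\varepsilon')$ (which it only asserts as ``easily checked''), and then bounds $\Theta(\overline{\varepsilon})-\Theta_{m-1}$ from above and $\Theta(\underline{\varepsilon'})-\Theta_{m-1}$ from below. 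You instead compare $\varepsilon$ and $\varepsilon'$ directly: open the gap at step $m$, discard the nonnegative $\varepsilon'$-tail, and dominate the $\varepsilon$-tail by the worst-case series $\sum_{k\geq m}C/N^{(2k+1)!}$. This makes your version self-contained --- it never needs the unproved reduction step --- which is a small but genuine advantage, while the estimates themselves are the paper's. Your final inequality does hold uniformly in $m\geq 1$: multiplying by $CN^{(2m-1)!}$, the right-hand side is at most $\frac{C^2}{N^{(2m-1)!(2m-1)}}\bigl(1+N^{-2}+N^{-2\cdot3\cdot4}+\cdots\bigr)<\frac{C^2}{N}\cdot\frac{N}{C^2}=1$ by \eqref{choice-of-N}, exactly as you sketch. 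Two cosmetic remarks: the closed form $\sum_{k\geq 0}N^{-(2k)!}$ begins with $N^{-1}$, not $1$, so it is not literally the series $1+N^{-2}+N^{-24}+N^{-720}+\cdots$ appearing in \eqref{choice-of-N} (harmless, since the latter is what you actually compare against); and for $m=1$ one should read $\Theta_0(\varepsilon)=\Theta_0(\varepsilon')=x$, a convention the paper uses implicitly as well.
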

We note that it is easily checked that if 
$\varepsilon \preceq \varepsilon'$, $\Theta (\varepsilon)  \leq \Theta (\varepsilon') $.

\begin{proof}
Take any $\varepsilon  =(\varepsilon_1, \varepsilon_2, \cdots ), \;
\varepsilon'  =(\varepsilon'_1, \varepsilon'_2, \cdots ) \in \mathcal{B}$, 
such that $\varepsilon \prec \varepsilon'$.
Then there exists an $m$ such that
$$
\varepsilon_j = \varepsilon'_j, \quad (j <m), \quad 
\text{and} \; \; \varepsilon_m =0 < \varepsilon'_m=1.
$$
For the above $\varepsilon, \varepsilon'$ and $m$, let
$$
\begin{aligned}
\overline{\varepsilon} & =  (\varepsilon_1, \cdots, \varepsilon_{m-1}, 0, 1, 1, 1, \cdots),\\
\underline{\varepsilon'}  & =  (\varepsilon_1, \cdots, \varepsilon_{m-1}, 1, 0, 0, 0,  \cdots).
\end{aligned}
$$
By definition, we have 
$\varepsilon  \preceq \overline{\varepsilon} \prec \underline{\varepsilon'} \preceq  \varepsilon' $.
So it suffices to prove that 
$\Theta (\overline{\varepsilon} )  < \Theta (\underline{\varepsilon'} ) $.

By inequality \eqref{theta-x-inequality}, we have 
$$
\begin{aligned}
{} & \Theta (\overline{\varepsilon} ) - \Theta_{m-1} (\overline{\varepsilon} ) \\
  & = 
    \{ \Theta_{m} (\overline{\varepsilon} ) - \Theta_{m-1} (\overline{\varepsilon} ) \}
      + \{ \Theta_{m+1} (\overline{\varepsilon} ) - \Theta_{m} (\overline{\varepsilon} ) \}
        +\{ \Theta_{m+2} (\overline{\varepsilon} ) - \Theta_{m+1} (\overline{\varepsilon} ) \} 
         + \cdots  \\
& \leq 
   C \left\{ \frac{1}{ N^{(2m)! } } + \frac{1}{ N^{(2m+1)! }  } 
                       + \frac{1}{ N^{(2m+3)! }  } + \frac{1}{ N^{(2m+5)! }  } +\cdots \right\} \\
& \leq 
   \frac{C}{ N^{(2m)! }  } \times
  \Big\{ 1+ \frac{1}{ N^{(2m+1) -1 } } + \frac{1}{ N^{(2m+1)(2m+2)(2m+3)  -1}    } 
\\ &\quad\quad\quad\quad\quad\quad\quad + 
         \frac{1}{ N^{(2m+1)(2m+2)(2m+3)(2m+4)(2m+5) -1}  } +\cdots \Big\} \\
& \leq 
   \frac{C}{ N^{(2m)!}  } 
      \left\{ 1+ \frac{1}{ N^{2}  } + \frac{1}{ N^{ 2 \cdot 3 \cdot 4  } }
               + \frac{1}{ N^{ 2 \cdot 3 \cdot 4 \cdot 5 \cdot 6 }  } + \cdots \right\}.
\end{aligned}
$$
Again by inequality \eqref{theta-x-inequality}, we have 
$$
\begin{aligned}
{} & \Theta (\underline{\varepsilon'} ) - \Theta_{m-1} (\underline{\varepsilon'} ) \\
       %=  \Theta (\underline{\varepsilon'} ) -\Theta_{m-1} (\overline{\varepsilon} )  \\
  & =
    \{ \Theta_{m} (\underline{\varepsilon'}  ) - \Theta_{m-1} (\underline{\varepsilon'}  ) \}
      + \{ \Theta_{m+1} (\underline{\varepsilon'}  ) - \Theta_{m} (\underline{\varepsilon'}  ) \}
        +\{ \Theta_{m+2} (\underline{\varepsilon'}  ) - \Theta_{m+1} (\underline{\varepsilon'}  ) \} 
         + \cdots  \\
& \geq 
   C^{-1} \left\{ \frac{1}{ N^{(2m-1)! } } + \frac{1}{ N^{(2m+2)! }  } 
                       + \frac{1}{ N^{(2m+4)! }  } + \frac{1}{ N^{(2m+6)! }  } +\cdots \right\} \\
& \geq 
   \frac{1}{ C N^{(2m-1)! }  }.
\end{aligned}
$$
By \eqref{choice-of-N},
we have $\Theta (\overline{\varepsilon} )  < \Theta (\underline{\varepsilon'} ) $.
\end{proof}

\begin{proposition}
For $\varepsilon  \in \mathcal{B}$,
$\Theta (\varepsilon)$ is very rapidly approximated by a sequence of $j_\nu$-rational numbers; i.e. for every $N$ there are two zeros $a,b$ of $j_\nu$ such that 
$$ |\Theta(\varepsilon) - a/b| \leq |b|^N. $$
\end{proposition}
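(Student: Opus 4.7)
The plan is to show that each partial iterate $\Theta_m(\varepsilon)$ from \eqref{Theta-sequence} is itself $j_\nu$-rational, with a denominator whose size is dwarfed by the speed at which $\Theta_m(\varepsilon)$ converges to $\Theta(\varepsilon)$. This turns the factorial-lacunary construction directly into the required Liouville-type approximation; we interpret the displayed inequality in the proposition as $|\Theta(\varepsilon)-a/b|\leq 1/|b|^N$ (a typo in the statement).

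First I would observe that, by its very definition, $\theta(n,x) = a_{\ell+1}/a_n$ is a ratio of two zeros of $j_\nu$. Iterating then gives $\Theta_m(\varepsilon) = a_{\ell_m}/a_{n_m(\varepsilon)}$ for some index $\ell_m$, so the denominator $b_m := a_{n_m(\varepsilon)}$ is itself a zero of $j_\nu$. The linear upper bound for the zeros stated just after \eqref{E:largezeros}, combined with $n_m(\varepsilon) \leq N^{(2m)!}$, yields
$$
b_m \leq C_1\, n_m(\varepsilon) \leq C_1 N^{(2m)!},
$$
where $N$ is the fixed integer from \eqref{choice-of-N}.

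Next I would telescope \eqref{Theta-m-difference}:
$$
0 \leq \Theta(\varepsilon) - \Theta_m(\varepsilon) = \sum_{k \geq m} \bigl(\Theta_{k+1}(\varepsilon) - \Theta_k(\varepsilon)\bigr) \leq C \sum_{k \geq m} \frac{1}{N^{(2k+1)!}}.
$$
Since $(2k+3)! - (2k+1)!$ grows superexponentially in $k$, the same sort of estimate that led to \eqref{choice-of-N} shows that the tail is at most a constant multiple of its first term, giving
$$
\Theta(\varepsilon) - \Theta_m(\varepsilon) \leq \frac{C'}{N^{(2m+1)!}}
$$
for some absolute constant $C' > 0$.

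Finally, given any positive integer $N_0$, I would choose $m$ large enough that $2m+1 > N_0$ and that a constant factor is absorbed. Using $(2m+1)! = (2m+1)(2m)!$ together with the bound on $b_m$,
$$
\frac{C'}{N^{(2m+1)!}} = \frac{C'}{N^{(2m+1)(2m)!}} \leq \frac{1}{(C_1 N^{(2m)!})^{N_0}} \leq \frac{1}{b_m^{N_0}},
$$
for all sufficiently large $m$, because the exponent $(2m+1)(2m)!$ exceeds $N_0(2m)!$ by an arbitrarily large margin. Taking $a = a_{\ell_m}$ and $b = b_m$ then provides the approximation claimed by the proposition. I do not foresee any genuine obstacle here; the main point to watch is purely notational, namely to avoid conflating the fixed integer $N$ from \eqref{choice-of-N} with the Liouville exponent $N_0$ to be achieved.
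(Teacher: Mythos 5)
Your proposal is correct and follows essentially the same route as the paper's proof: you identify each $\Theta_m(\varepsilon)$ as a $j_\nu$-rational number $a_{\ell_m}/a_{n_m(\varepsilon)}$, telescope \eqref{Theta-m-difference} using the factorial lacunarity and \eqref{choice-of-N} to bound the tail by its first term, and then note that the resulting bound decays faster than any fixed power of the denominator $a_{n_m(\varepsilon)}$. Your reading of the displayed inequality as $|\Theta(\varepsilon)-a/b|\leq 1/|b|^{N}$ is also the intended one.
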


The above result states that $\Theta(\epsilon)$ is either $j_\nu$-rational 
or $j_\nu$-Liouville.

\begin{proof} 
Note that each $\Theta_m (\varepsilon)$ is written as
$$
\Theta_m (\varepsilon) = \frac{ a_{\ell} }{ a_{ n_m( \varepsilon )  }  },
$$
for some $\ell$.
We also note that $a_{ n_m( \varepsilon )  }  \sim \pi n_m( \varepsilon ) $.
In the same manner as in the proof of Proposition \ref{injectivity-of-Theta-map}, 
we have
$$
\begin{aligned}
{}& 0 < \Theta (\varepsilon) - \Theta_m (\varepsilon)  \\
 & = \{ \Theta_{m+1} (\varepsilon) - \Theta_m (\varepsilon) \}
      + \{ \Theta_{m+2} (\varepsilon) - \Theta_{m+1} (\varepsilon) \} 
       + \{ \Theta_{m+3} (\varepsilon) - \Theta_{m+2} (\varepsilon) \} +     \cdots \\
 & \leq 
    \frac{C}{n_{m+1} (\varepsilon) }  +  \frac{C}{n_{m+2} (\varepsilon) } + 
       \frac{C}{n_{m+3} (\varepsilon) }   +     \cdots 
        \qquad ( \text{by } \eqref{Theta-m-difference} )       \\
 & =
   \frac{C}{n_{m+1} (\varepsilon) } 
    \left\{ 1 + \frac{ n_{m+1} (\varepsilon) }{ n_{m+2} (\varepsilon) } 
       + \frac{ n_{m+1} (\varepsilon) }{ n_{m+3} (\varepsilon) }  + \cdots 
       \right\}  \\
 & \leq 
   \frac{C}{n_{m+1} (\varepsilon) } 
    \left\{ 1 + \frac{ 1 }{ N^2 } 
       + \frac{ 1 }{ N^{2 \cdot 3 \cdot 4} } 
    + \frac{1}{ N^{ 2 \cdot 3 \cdot 4 \cdot 5 \cdot 6 }  } + \cdots 
       \right\}  \\
 & \leq 
   \frac{C}{n_{m+1} (\varepsilon) } \times C   \qquad ( \text{by} \eqref{choice-of-N} ) \\
 &  \leq  \frac{C^2}{n_{m} (\varepsilon) ^{2m+1}}  
    \leq \frac{C^3}{a_{ n_m ( \varepsilon ) }^{2m+1}}.
\end{aligned}
$$
The above inequality proves the assertion.
\end{proof}

Let us take the least element ${\bold 0} = (0,0, 0, \cdots )$ 
and the greatest element ${\bold 1} = (1,1, 1, \cdots )$
in $\mathcal{B}$, then for $\varepsilon  \in \mathcal{B}$,
$$
x < \Theta ({\bold 0}) \leq \Theta (\varepsilon) \leq \Theta ({\bold 1}).
$$
Moreover
$$
\begin{aligned}
{}& \Theta ({\bold 1}) 
   < x + C \left\{ 
    \frac{1}{n_1 ({\bold 1})} + \frac{1}{n_2 ({\bold 1})} + \frac{1}{n_3 ({\bold 1})} + \cdots
        \right\} \\
& = x + C \left\{ 
    \frac{1}{N^1} + \frac{1}{ N^{3!} } + \frac{1}{ N^{5!} } + \cdots
        \right\} \\
& < x + \frac{C}{N} \left\{ 
   1 + \frac{1}{ N^{2} } + \frac{1}{ N^{ 2 \cdot 3 \cdot 4} } + \cdots
        \right\}     \qquad ( \text{by} \eqref{choice-of-N} ) \\
&  <    x + \frac{C^2}{N}. 
\end{aligned}        
$$
Let us now choose a sufficiently large $N$ so that $\displaystyle{ x + \frac{C^2}{N} < y }$,
then we have
$$
x < \Theta (\varepsilon) < y,
$$
for any $\varepsilon  \in \mathcal{B}$.

By Proposition \ref{injectivity-of-Theta-map} and by the fact that $\mathcal{B}$ is an uncountable set,
we see that the set $\{ \Theta (\varepsilon) \in \mathbb{R} \, | \, \varepsilon \in \mathcal{B} \, \}$ 
is an uncountable subset of $(x,y)$.
On the other hand, the set of $j_\nu$-rational numbers is countable, so for uncountably many 
$\varepsilon \in \mathcal{B}$, $\Theta (\varepsilon)$ is a $j_\nu$-Liouville number
in $(x,y)$. 
%Since $j_\nu$ is even it follows that if $x$ is a $j_\nu$-Liouville number, then so is $-x$. Therefore, 
Since $j_\nu$ is even $x$ and $y$ do not need to be positive and we obtain the following.

\begin{theorem}
The set of $j_\nu$-Liouville numbers is uncountable and dense in $\mathbb R$.
\end{theorem}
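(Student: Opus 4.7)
The plan is to assemble the pieces that were just built in the preceding construction, observing that essentially all the heavy lifting has already been done. Fix an open interval $(x,y)\subset\mathbb R$ with $0<x<y$; the goal will be to exhibit uncountably many $j_\nu$-Liouville numbers inside $(x,y)$, from which density (on $(0,\infty)$) and uncountability will follow at once, and then to extend to all of $\mathbb R$ using that $j_\nu$ is even so the negatives of zeros are again zeros.

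First I would fix $N$ large enough to satisfy \eqref{choice-of-N} and also large enough that $x+C^2/N<y$, and consider the family $\{\Theta(\varepsilon):\varepsilon\in\mathcal B\}$ coming from \eqref{Theta-sequence}. The estimates already established show two things simultaneously: every $\Theta(\varepsilon)$ lies in $(x,y)$ (by the displayed chain bounding $\Theta(\mathbf 1)$ from above by $x+C^2/N$, together with $\Theta(\mathbf 0)>x$), and every $\Theta(\varepsilon)$ is approximated by the $j_\nu$-rational numbers $\Theta_m(\varepsilon)=a_\ell/a_{n_m(\varepsilon)}$ to within $C^3/a_{n_m(\varepsilon)}^{2m+1}$, hence $\Theta(\varepsilon)$ is either $j_\nu$-rational or $j_\nu$-Liouville by the definition.

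Next I would apply Proposition~\ref{injectivity-of-Theta-map}: since $\prec$ is a strict order on $\mathcal B$ and $\Theta$ is strictly order-preserving, $\Theta$ is injective. Since $\mathcal B=\{0,1\}^{\mathbb N}$ is uncountable, the set $\{\Theta(\varepsilon):\varepsilon\in\mathcal B\}\subset(x,y)$ is uncountable. The set of $j_\nu$-rational numbers is countable (it is the image of $Z\times(Z\setminus\{0\})$ under $(a,b)\mapsto a/b$, and $Z$ is countable because the zeros form a discrete set with only finitely many zeros of bounded modulus, by \eqref{E:largezeros}). Subtracting a countable set from an uncountable one leaves an uncountable set, so uncountably many $\Theta(\varepsilon)$ are genuine $j_\nu$-Liouville numbers in $(x,y)$.

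Finally, to pass from positive intervals to arbitrary open intervals in $\mathbb R$, I would invoke that $j_\nu$ is even so $Z=-Z$; consequently $z$ is $j_\nu$-Liouville iff $-z$ is, and the set of $j_\nu$-Liouville numbers is symmetric about $0$. Given any open interval $(x,y)\subset\mathbb R$, either it meets $(0,\infty)$, in which case the argument above applies directly to a positive subinterval, or it meets $(-\infty,0)$, in which case we apply the argument to $-y<u<-x$ in $(0,\infty)$ and negate. This yields uncountably many $j_\nu$-Liouville numbers in every open interval, so the set is both uncountable and dense in $\mathbb R$. There is no real obstacle here; the only place one must be slightly careful is verifying that $Z$ is countable and that the inequality $x+C^2/N<y$ can be arranged simultaneously with \eqref{choice-of-N}, both of which are immediate.
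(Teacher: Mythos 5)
Your proposal is correct and follows essentially the same route as the paper: it assembles the construction $\Theta(\varepsilon)$, the strict monotonicity (hence injectivity) of $\varepsilon\mapsto\Theta(\varepsilon)$, the rapid approximation by $j_\nu$-rational numbers, the countability of the $j_\nu$-rationals, and the evenness of $j_\nu$ to cover negative intervals, exactly as the paper does. No gaps.
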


\subsection{The measure of the set of $j_\nu$-Liouville numbers}\label{subsec:measure-Bessel-Liouville}

In this subsection, we prove that
the Lebesgue measure of
the set of $j_\nu$-Liouville numbers is zero.
For the zeros $\{ a_m \}_{m=1}^{\infty}$ of $j_{\nu}$,
and for positive integers $L$, $n$, and $p$,
let us define four sets 
$\mathcal{Z}_n^{(L)}$, %$\mathcal{Z}^{(L)}$,
$\mathcal{Y}_n^{(L)} (p)$, $\mathcal{Y}^{(L)} (p)$ and $\mathcal{Y}^{(L)}$ by
$$
\begin{aligned}
\mathcal{Z}_n^{(L)} 
& = \left\{ \frac{a_k}{a_n} \in (0, L) \, \Big| \, 1 \leq k < \infty \right\},\\
%\qquad 
%\mathcal{Z}^{(L)} =  \bigcup_{n=1}^{\infty} %Z_n^{(L)}, \\
\mathcal{Y}_n^{(L)} (p) 
 & =  \left\{ y  \in (0, L)  \, \Big| \,  \mathrm{dist} (y, \mathcal{Z}_n^{(L)}) \leq \frac{1}{\alpha_n^p} \right\}, \\
%\quad
\mathcal{Y}^{(L)} (p) & = \bigcup_{n=1}^{\infty} \mathcal{Y}_n^{(L)} (p),
\quad 
\mathcal{Y}^{(L)} = \bigcap_{p=1}^{\infty} \mathcal{Y}^{(L)} (p).
\end{aligned}
$$
By the above definition, we easily see that 
$y \in \mathcal{Y}^{(L)}$ if and only if
$y \in (0, L)$ and
there exist increasing sequences $\{ n_p \}_{p=1}^{\infty}$ and $\{ \ell_p \}_{p=1}^{\infty}$
such that
$$
\left| \, y - \frac{a_{\ell_p} }{a_{n_p}} \, \right| \leq \frac{1}{a_{n_p}^{\, p}}.
$$
Namely if $y$ is a $j_\nu$-Liouville number in the interval $(0, L)$
then $y \in \mathcal{Y}^{(L)}$.
Let
$$
K(L, n) = \max \left\{ \; k \, \Big| \; \frac{a_k}{a_n} < L \right\}.
$$
Taking \eqref{E:largezeros} into account, we see that there exists
a positive constant $C_3$ independent of $L$ and $n$ such that $K(L, n) < C_3 Ln$.

Let us now prove that for any $L$ the Lebesgue measure of $\mathcal{Y}^{(L)}$ is zero.
By the definition of $\mathcal{Y}_n^{(L)} (p) $,
we have
$$
\mu (\mathcal{Y}_n^{(L)} (p) ) 
  \leq \sum_{k=1}^{K(L, n)} 
    \mu \Big( \Big[ \frac{a_k}{a_n} - \frac{1}{a_n^p} , 
       \frac{a_k}{a_n} + \frac{1}{a_n^p} \Big] \Big) 
  \leq  \frac{ 2 C_3 L n }{ a_n^p },
$$
where $\mu$ denotes the Lebesgue measure on $\mathbb{R}$.
Thus we have
$$
\mu ( \mathcal{Y}^{(L)} (p) ) 
\leq \sum_{n=1}^{\infty} \mu ( \mathcal{Y}_n^{(L)} (p)  ) 
\leq  2 C_3 L \sum_{n=1}^{\infty} \frac{n}{a_n^p}.
$$
Again by \eqref{E:largezeros}, 
the infinite sum in the right hand side of the above inequality converges if $p \geq 3$.
In addition, if we take account of the fact that $1 < \alpha_1 <\alpha_2 < \cdots$,
we see that
$$
\sum_{n=1}^{\infty} \frac{n}{a_n^p} \to 0, \qquad \text{as} \; p \to \infty.
$$
Therefore, for any $L$, we have
$$
\mu (\mathcal{Y}^{(L)} ) = \lim_{p \to \infty} \mu ( \mathcal{Y}^{(L)} (p) ) =0.
$$
Since $\displaystyle{  \mathcal{Y} := \bigcup_{L=1}^{\infty} \mathcal{Y}^{(L)} }$
includes the set of $j_\nu$-Liouville numbers,
we obtain the following.
\begin{theorem}
The Lebesgue measure of the set of $j_\nu$-Liouville numbers is zero.
\end{theorem}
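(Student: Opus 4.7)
The goal is to bound the set of $j_\nu$-Liouville numbers by a measure-zero set constructed from neighborhoods of $j_\nu$-rationals. By Corollary \ref{T:nu-complex} any $j_\nu$-Liouville number is real, and since $j_\nu$ is even its zero set is symmetric, so it suffices to show the measure zero property on $(0,\infty)$, and in turn on every bounded interval $(0,L)$ with $L\in\mathbb{N}$. The $\mathcal{Y}^{(L)}$ construction above is designed precisely for this: if $y\in(0,L)$ is $j_\nu$-Liouville, then for each $p$ we can find zeros $a_\ell,a_n$ with $|y-a_\ell/a_n|<1/a_n^p$ and $a_n>1$, which places $y$ in $\mathcal{Y}_n^{(L)}(p)\subseteq\mathcal{Y}^{(L)}(p)$ for some $n$, and hence $y\in\mathcal{Y}^{(L)}$.

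The plan is then a direct covering argument. First, I would use the asymptotic \eqref{E:largezeros} to justify the bound $K(L,n)\leq C_3 L n$ for some constant $C_3>0$ independent of $L,n$: this counts how many $j_\nu$-rationals $a_k/a_n$ lie in $(0,L)$, and it is controlled because the spacing between successive $a_k$ is bounded below. Next, by definition $\mathcal{Y}_n^{(L)}(p)$ is covered by the $K(L,n)$ closed intervals of length $2/a_n^p$ centered at $a_k/a_n$, $1\leq k\leq K(L,n)$, so
\begin{equation*}
\mu\bigl(\mathcal{Y}_n^{(L)}(p)\bigr)\leq \frac{2 K(L,n)}{a_n^p}\leq \frac{2 C_3 L n}{a_n^p}.
\end{equation*}
Summing over $n$ yields $\mu(\mathcal{Y}^{(L)}(p))\leq 2 C_3 L\sum_{n\geq 1} n/a_n^p$.

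The key analytic step is to show that this last series converges for large $p$ and tends to zero as $p\to\infty$. From \eqref{E:largezeros} we have $a_n\sim n\pi$, so for some $C_4>0$ we have $n/a_n^p\leq C_4\, n^{1-p}$ for all large $n$, giving convergence once $p\geq 3$. For the vanishing in the limit, note that after discarding the finitely many zeros (if any) of modulus $\leq 1$, we have $a_n>1$ for all $n\geq n_0$; for $p\geq 3$ the tail is dominated by $\sum_{n\geq n_0} n/a_n^3$ and each term $n/a_n^p\to 0$ as $p\to\infty$, so dominated convergence (for series) gives the tail $\to 0$. The finitely many initial terms also tend to zero individually. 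Thus $\mu(\mathcal{Y}^{(L)}(p))\to 0$, and since $\mathcal{Y}^{(L)}=\bigcap_p \mathcal{Y}^{(L)}(p)$ is a decreasing intersection of finite-measure sets, $\mu(\mathcal{Y}^{(L)})=0$. Finally $\mathcal{Y}=\bigcup_L \mathcal{Y}^{(L)}$ is a countable union of null sets, so the set of positive $j_\nu$-Liouville numbers has measure zero, and by evenness so does the full set.

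The only subtlety I anticipate is ensuring the asymptotic \eqref{E:largezeros} is strong enough to give the uniform lower bound on spacing needed for $K(L,n)\leq C_3 L n$ (not just for large $n$) and a uniform $a_n>1$; this is handled by noting that \eqref{E:largezeros} applies to all but finitely many zeros, and the finitely many exceptional zeros contribute only a bounded-measure correction that is absorbed into the constant $C_3$ and does not affect the tail behavior as $p\to\infty$.
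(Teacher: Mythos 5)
Your proposal is correct and follows essentially the same covering argument as the paper: bounding the $j_\nu$-Liouville numbers in $(0,L)$ by the sets $\mathcal{Y}^{(L)}(p)$, estimating $\mu(\mathcal{Y}_n^{(L)}(p))\leq 2C_3Ln/a_n^p$ via the count $K(L,n)\leq C_3Ln$, letting $p\to\infty$, and taking the countable union over $L$. The extra care you take with the finitely many exceptional zeros and with evenness is consistent with, and only slightly more explicit than, the paper's treatment.
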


\subsection{The $j_\nu$-Liouville numbers for varying $\nu$}
The following result implies
that there is a $\nu$ for which
the $j_\nu$-Liouville numbers are not the same as classical Liouville numbers, so our work
indeed expands the theory of Liouville numbers.

\begin{theorem}
    Given $\nu$ there is a $\nu'$ such that the
    set of $j_\nu$-Liouville numbers is different from the set of $j_{\nu'}$-Liouville numbers.
\end{theorem}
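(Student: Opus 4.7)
The plan is to use the density of $j_\nu$-Liouville numbers in $\mathbb R$ (established in Subsection~\ref{subsec:Cardinality-Bessel-Liouville}) together with the analytic dependence of the zeros of $j_\mu$ on the order $\mu$, to produce a $\nu'$ for which some $j_\nu$-Liouville number happens to be $j_{\nu'}$-rational. First dispose of the case when $\nu$ is non-real: by Corollary~\ref{T:nu-complex} there are no $j_\nu$-Liouville numbers at all, while taking $\nu' = 1/2$ yields the nonempty set of classical Liouville numbers.

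Assume now that $\nu$ is real. Let $a_m(\mu)$ denote the $m$-th positive zero of $j_\mu$, and fix $m$ large enough (possible by \eqref{E:largezeros}) that $a_m(\nu)$ and $a_{m+1}(\nu)$ are real and simple. Since $j_\mu(z)$ is jointly real-analytic in $(\mu, z)$, the implicit function theorem provides an open real interval $U$ containing $\nu$ on which $a_m(\mu)$ and $a_{m+1}(\mu)$ are well-defined real-analytic functions of $\mu$; define $\phi(\mu) = a_m(\mu)/a_{m+1}(\mu)$ on $U$. Differentiating \eqref{E:largezeros} in $\mu$ gives $a_k'(\mu) = \pi/2 + O(1/k)$, and a direct calculation then yields $\phi'(\nu) \sim 1/(2m^2)$ for large $m$. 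Thus, by enlarging $m$ further if necessary, $\phi$ is non-constant on $U$, and by the open mapping property for non-constant real-analytic functions on an interval, $\phi(U)$ is an open subinterval of $\mathbb R$.

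Since the set of $j_\nu$-Liouville numbers is dense in $\mathbb R$, there exists $L^* \in \phi(U)$ that is $j_\nu$-Liouville. Write $L^* = \phi(\nu')$ for some $\nu' \in U$. The value $\phi(\nu) = a_m(\nu)/a_{m+1}(\nu)$ is $j_\nu$-rational while $L^*$ is not, so $\nu' \neq \nu$. On the other hand, $L^* = a_m(\nu')/a_{m+1}(\nu')$ is $j_{\nu'}$-rational, hence not $j_{\nu'}$-Liouville. Therefore $L^*$ belongs to the set of $j_\nu$-Liouville numbers but not to the set of $j_{\nu'}$-Liouville numbers, and the two sets are different. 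The principal subtlety is the non-constancy of $\phi$, which rests on the explicit leading-order asymptotics in \eqref{E:largezeros}; the remaining ingredients (real-analytic dependence of simple zeros on the parameter, and the open mapping property) are standard.
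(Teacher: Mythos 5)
Your overall strategy is essentially the same mechanism as the paper's proof: a ratio of two zeros of $j_\mu$ varies continuously with the order $\mu$, so by density of the $j_\nu$-Liouville numbers one can find $\nu'$ at which that ratio equals a $j_\nu$-Liouville number, which is then $j_{\nu'}$-rational and hence not $j_{\nu'}$-Liouville. The paper realizes this globally: it sets $f(\mu)=a_{\mu,1}/a_{\mu,2}$, uses $a_{1/2,k}=k\pi$ together with Watson's bounds on the first two zeros of $J_0$ to get $f(1/2)=1/2<14/23<f(0)$, and applies the intermediate value theorem on $(0,1/2)$, thereby avoiding any differentiation in the order. Your explicit treatment of non-real $\nu$ via Corollary~\ref{T:nu-complex} is fine (and is a case the paper leaves implicit).

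The gap is in your justification of the non-constancy of $\phi(\mu)=a_m(\mu)/a_{m+1}(\mu)$, which is the crux of your local version. You obtain $a_k'(\mu)=\pi/2+O(1/k)$ by ``differentiating \eqref{E:largezeros} in $\mu$,'' but \eqref{E:largezeros} is an asymptotic statement in $m$ for each fixed order, and nothing cited controls the $\mu$-dependence of the $O(1/m)$ remainder; asymptotic formulas cannot in general be differentiated with respect to a parameter, so the estimate $\phi'(\nu)\sim 1/(2m^2)$ does not follow from what you invoke, and with it the openness of $\phi(U)$ is unproved. To close the gap you need either a differentiable-in-$\nu$ version of the large-zero expansion (a McMahon-type expansion with a remainder whose $\nu$-derivative is controlled), or an independent monotonicity argument: Watson's formula (\cite{WatsonTreatiseBessel}, \S 15.6), $\frac{da}{d\nu}=2a\int_0^\infty K_0(2a\sinh t)\,e^{-2\nu t}\,dt$, shows that $\frac{d}{d\nu}\log a_{\nu,k}$ is strictly decreasing in the zero $a$, whence $\phi'>0$ in the range where that formula applies; or you can simply fall back on the paper's route, proving non-constancy from explicit values at two orders and concluding with the intermediate value theorem.
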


\begin{proof}
Let $a_{\nu,k}$ refer
to the $k$th positive zero of $j_\nu$.
From \S 15.6 in \cite{WatsonTreatiseBessel} it is known that $a_{\nu,k}$ depends continuously on $\nu$ and therefore the function
$f(\nu)=a_{\nu,1}/a_{\nu,2}$ is also continuous in $\nu$. Moreover, the values of $f$ are all real (see \S 15.25 in \cite{WatsonTreatiseBessel}).
We know that $a_{1/2,k}=k\pi$ and 
from \S 15.32 in \cite{WatsonTreatiseBessel} we also know that $a_{0,1}$ is in
$(7\pi/4,15\pi/8)$ and $a_{0,2}$ is
in $(11\pi/4,23\pi/8)$. Therefore
$f(0) > \frac{7\pi/4}{23\pi/8} = 14/23>1/2$
while $f(1/2) = 1/2.$ For any given 
$\nu$ let $x$ be a $j_{\nu}$-Liouville number
between $f(0)$ and $f(1/2)$.
By the intermediate value theorem there is a 
$\nu'$ in $(0,1/2)$ for which $f(\nu')=x$. But $f(\nu')$
is $j_{\nu'}$-rational, which finishes the proof.
\end{proof}

\bibliographystyle{alpha}

\bibliography{Gonzalez}

%\bibliography{bibtex}
%\bibliographystyle{plain}
\end{document}